\title{All passable games are realizable as monotone set coloring games}
\author{Eric Demer, UCLA\\
  Peter Selinger, Dalhousie University\\
  Kyle Wang, Dalhousie University}
\date{}
\begin{document}

\maketitle

\begin{abstract}
  The class of passable games was recently introduced by Selinger as a
  class of combinatorial games that are suitable for modelling
  monotone set coloring games such as Hex. In a monotone set coloring
  game, the players alternately color the cells of a board with their
  respective color, and the winner is determined by a monotone
  function of the final position. It is easy to see that every
  monotone set coloring game is a passable combinatorial game. Here we
  prove the converse: every passable game is realizable, up to
  equivalence, as a monotone set coloring game.
\end{abstract}

\section{Introduction}

A class of combinatorial games that is suitable for modelling Hex and
other monotone set coloring games was recently introduced in
{\cite{Selinger2021}}. In a monotone set coloring game
{\cite{VanRijswijck}}, the players alternately color the cells of a
board with their respective color. When all cells have been colored,
the winner is determined by a monotone function of the final board
coloring. Hex is perhaps the best-known example of a monotone set
coloring game, where the board looks as follows, and the winner is the
player (Black or White) who connects the two edges of their own color
{\cite{Hein,Nash,Gardner,Hayward-full-story}}.
\[
\begin{hexboard}
  \board(5,5)
\end{hexboard}
\]
Note that the winning condition in Hex is indeed monotone, because
the addition of black cells (and the removal of white ones) can only
help Black. Also, it is a well-known property of Hex that there are no
draws, i.e., one player necessarily wins.

Combinatorial game theory is a formalism for the study of sequential
perfect information games. It was introduced by Conway {\cite{ONAG}}
and Berlekamp, Conway, and Guy {\cite{WinningWays}}. In
{\cite{Selinger2021}}, Selinger introduced a class of combinatorial
games called ``passable'' games, and showed that the combinatorial
value of every monotone set coloring game is passable. The converse,
i.e., whether every (finite) passable game can be realized as the
value of a monotone set coloring game, was left as an open question.
In this paper, we answer this question positively.

\section{Background}

\subsection{Passable games}

We briefly recall some definitions and results that are needed in this
paper. For a more comprehensive account, see {\cite{Selinger2021}}.

\begin{definition}[Games over a poset]
  Fix a partially ordered set $A$, whose elements we call {\em
    atoms}. We define a class of combinatorial games as follows:
  \begin{itemize}
  \item For every atom $a\in A$, $[a]$ is a game.
  \item If $L$ and $R$ are non-empty sets of games, then $\g{L|R}$ is a
    game.
  \end{itemize}
  This definition is inductive, i.e., the class of games is the smallest
  class closed under the above two rules.
\end{definition}

We call a game of the form $[a]$ {\em atomic} and a game of the form
$\g{L|R}$ {\em composite}.  The elements of $L$ and $R$ are called the
game's {\em left} and {\em right options}, respectively. We use the
standard conventions of combinatorial game theory; for example, we
write $G^L$ and $G^R$ for a typical left and right option of $G$. Note
that atomic games do not have options, so if $G=[a]$ is an atomic
game, any statement of the form ``for all $G^L$'' is vacuously true,
and any statement of the form ``there exists $G^L$'' is trivially
false. We sometimes write $a$ instead of $[a]$ for an atomic game when
no confusion arises. The {\em positions} of a game $G$ are $G$ itself,
all of its left and right options, their left and right options, and
so on recursively.  A game $G$ is called {\em finite} or {\em short}
if $G$ has finitely many positions. In this paper, we assume that all
atom posets have a top element, which we denote $\top$, and a bottom
element, which we denote $\bot$.

\begin{definition}[Order]
  On the class of games, we define two relations $\leq$ and $\tri$ by
  mutual recursion as follows.
    \begin{itemize}
  \item $G\leq H$ if all three of the following conditions hold:
    \begin{enumerate}
    \item All left options $G^L$ satisfy $G^L\tri H$, and
    \item all right options $H^R$ satisfy $G\tri H^R$, and
    \item if $G$ or $H$ is atomic, then $G\tri H$.
    \end{enumerate}
  \item $G\tri H$ if at least one of the following
    conditions holds:
    \begin{enumerate}
    \item There exists a right option $G^R$ such that $G^R\leq H$, or
    \item there exists a left option $H^L$ such that $G\leq H^L$, or
    \item $G=[a]$ and $H=[b]$ are atomic and $a\leq b$.
    \end{enumerate}
  \end{itemize}
\end{definition}

Note that when the games are composite, this definition coincides with
the standard definition of $\leq$ and $\tri$ in combinatorial game
theory {\cite{ONAG}}. The only difference is in the treatment of
atomic games. See {\cite{Selinger2021}} for a detailed explanation of
why this definition makes sense.

We say that games $G$ and $H$ are {\em equivalent}, in symbols $G\eq H$, if
$G\leq H$ and $H\leq G$. We sometimes refer to an equivalence class of
games as a {\em value}, i.e., we say that two games have the same
value if and only if they are equivalent. Just like in standard
combinatorial game theory, there is a notion of {\em canonical form}
of our games, such that each game is equivalent to a unique canonical
form. See {\cite{Selinger2021}} for details.

As usual in combinatorial game theory, we say that $H$ is a {\em left
  gift horse} for $G$ if $H\tri G$. The {\em gift horse lemma} states
that for a composite game $G$, we have $G\eq\g{H,G^L|G^R}$ if and only
if $H\tri G$. Of course the dual of this statement is also true, i.e.,
$H$ is a {\em right gift horse} for $G$ if $G\tri H$, and in this
case, $G\eq\g{G^L|G^R,H}$.

\begin{definition}[Monotone and passable games]
  If $G$ is a composite game, we say that a left option $G^L$ is {\em
    good} if $G\leq G^L$. Dually, we say that a right option $G^R$ is
  {\em good} if $G^R\leq G$. A game is called {\em locally monotone}
  if all of its left and right options are good, and {\em locally
    passable} if it is atomic or has at least one good left option or
  one good right option. A game $G$ is called {\em monotone}
  (respectively {\em passable}) if all positions of $G$ are locally
  monotone (respectively locally passable).
\end{definition}

Intuitively, a good option is one that helps the player who plays it.
In a monotone game, all options are good, so making any move is always
at least as good as passing. In a passable game, there is at least one
good option, so even if passing were allowed, no player would benefit
from doing so. This is explained in more detail in
{\cite{Selinger2021}}. We note that $G$ is locally passable if and
only if $G\tri G$. Trivially, every monotone game is passable. A kind
of converse is given by the following theorem:

\begin{theorem}[Fundamental theorem of passable games {\cite{Selinger2021}}]
  \label{thm:fundamental}
  Every passable game is equivalent to a monotone game.
\end{theorem}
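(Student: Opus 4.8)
The plan is to prove the theorem by induction on the birthday of $G$, reducing at each step to a purely local problem about the top-level options and then eliminating the options that fail to be good by the standard combinatorial-game simplifications. The base case is immediate: an atomic game $[a]$ has no options, so it is vacuously locally monotone, and since it is its own only position it is monotone.

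For the inductive step, let $G$ be a composite passable game. Every option of $G$ is again passable and has strictly smaller birthday, so by the induction hypothesis each option is equivalent to a monotone game. Replacing every option of $G$ by such a monotone equivalent yields a game $G'\eq G$ all of whose options are monotone. Local passability survives this replacement: from $G\tri G$ (local passability of $G$), $G'\leq G$, and $G\leq G'$, the mixed transitivity laws $A\leq B\tri C\Rightarrow A\tri C$ and $A\tri B\leq C\Rightarrow A\tri C$ give first $G'\tri G$ and then $G'\tri G'$. Thus it suffices to solve the following local problem: given a locally passable composite game whose options are all monotone, produce an equivalent game all of whose top-level options are good. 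Since the options will be left monotone (or replaced by monotone games), the whole game then becomes monotone, closing the induction.

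The heart of the argument is a classification of the \emph{non-good} options. Suppose $G^L$ is a left option with $G\not\leq G^L$. Unfolding the definition of $\leq$, at least one defining clause fails, and I expect to show—at least when the options involved are composite, where $\leq$ and $\tri$ agree with their classical counterparts—that either some right option $(G^L)^R$ satisfies $(G^L)^R\leq G$ (so $G^L$ is \emph{reversible}), or some \emph{other} left option $G^{L'}$ satisfies $G^L\leq G^{L'}$ (so $G^L$ is \emph{dominated}); the option witnessing domination is genuinely distinct from $G^L$ because $G^L$, being a position of a passable game, is locally passable and hence not disqualified by its own reflexivity. Dually for right options. Consequently every such non-good option can be deleted (if dominated) or bypassed (if reversible) by the standard value-preserving moves.

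The main obstacle is the treatment of \emph{atomic} options, and this is exactly where the nonstandard clauses in the definitions of $\leq$ and $\tri$, and the hypothesis that the atom poset has a top $\top$ and a bottom $\bot$, must be used. An atomic left option $[a]$ has no right options, so it can never be reversed; moreover, when a sub-option occurring in the unfolding above is itself atomic, the atomic clause intervenes and $\neg(G\tri X)$ no longer reduces to $X\leq G$, so even the reversibility prong of the classification must be re-examined. I would handle a non-good atomic option by comparing its atom to $\top$ (respectively $\bot$ for right options) and to the atom supplied by local passability, replacing or absorbing it through the poset structure while holding the value fixed. Two further points then need care: I must keep both option sets non-empty, using the good option guaranteed by $G\tri G$ and, if a side would be emptied, supplying a good boundary option built from $\top$ or $\bot$; and I must ensure termination, since reversing an option introduces the options of the game reversed through, which may themselves be non-good and must be re-simplified. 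Termination I would obtain from an induction measure that strictly decreases under these moves, as in the construction of canonical forms, and preservation of passability at every position from the equivalence-invariance of local passability established above. Assembling these pieces produces a monotone game equivalent to $G$, so that every passable game is equivalent to a monotone one.
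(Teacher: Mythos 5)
First, a point of comparison: this paper does not actually prove Theorem~\ref{thm:fundamental} at all --- it is imported as background from \cite{Selinger2021} --- so the relevant comparison is with that reference's proof, whose mechanism is mirrored by the gadget lemmas of Section~\ref{ssec:new-gadgets}.

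Your proposal has a genuine gap, and it is not the one you flagged. The heart of your argument is the classification ``every non-good option is dominated or reversible,'' which you obtain by negating the definition of $\leq$ and invoking the classical equivalence between $\neg(A\tri B)$ and $B\leq A$. In this theory that equivalence fails, and not only at atoms: because the recursion bottoms out in a genuine poset, incomparability propagates upward, and there are pairs of \emph{composite} games satisfying neither $A\tri B$ nor $B\leq A$. Concretely, let $a,b$ be incomparable atoms and consider $G=\g{a,b|\bot}$ (a value listed in Appendix~\ref{app:a}). It is passable, since $[\bot]$ is a good right option; but the left option $[a]$ is not good (because $[b]\not\tri[a]$), not dominated (because $[a]\not\leq[b]$), and not reversible (being atomic, it has no right options). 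For a fully composite version, take $\g{\g{a|a},\g{b|b}|\bot}$: the left option $\g{a|a}$ is monotone, yet it is not good, not dominated ($\g{a|a}\not\leq\g{b|b}$ even though also $\g{b|b}\not\tri\g{a|a}$), and not reversible (its only right option $[a]$ satisfies $[a]\not\tri[\bot]$, hence does not revert). So your parenthetical claim that the classical dichotomy holds ``at least when the options involved are composite'' is exactly where the argument breaks; the atomic difficulty you flagged is the generic situation, not a boundary case.

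This defeats the strategy itself, not merely its write-up. Deleting dominated options and bypassing reversible ones can only ever drive a game toward its canonical form, and canonical forms of passable games are in general \emph{not} monotone: $\g{a,b|\bot}$ has no dominated or reversible options whatsoever, yet fails to be monotone. A monotone equivalent must therefore be built by \emph{enlarging} the options rather than removing them: one replaces a set of options $S$ by the left-equivalent wrapped game $\upl S=\g{\top|\g{S|\bot}}$ (dually $\downr S$), and upgrades a passable game to a semi-monotone one by adjoining gift-horse options such as $\g{G_i|\bot}$. For instance, $\g{a,b|\bot}\eq\g{\upl a,\upl b|\bot}$, and the right-hand game is monotone even though its options are strictly deeper than those of the original. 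This forcing-wrapper mechanism is the substance of the proof in \cite{Selinger2021}, and it is exactly what Lemmas~\ref{lem:one-sided-choice}--\ref{lem:nary-coupling} of this paper reconstruct; your proposal never invokes anything of this kind, and without it the induction cannot close.
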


We also need a few operations on games.

\begin{definition}[Sum]
  Let $G$ and $H$ be games over atom posets $A$ and $B$, respectively.
  Then their {\em sum} $G+H$ is a game over the cartesian product
  poset $A\times B$, and is defined recursively as follows:
  \begin{itemize}
  \item If at least one of $G$ or $H$ is composite:
    \[ G+H = \g{G^L+H, G+H^L | G^R+H, G+H^R}.
    \]
  \item If $G=[a]$ and $H=[b]$ are atomic:
    \[ G+H = [(a,b)].
    \]
  \end{itemize}
\end{definition}

Note that by our convention, the definition simplifies in case one of
$G,H$ is atomic and the other is not. For example, if $G$ is atomic
and $H$ is not, then $G$ has no left or right options, so $G+H=
\g{G+H^L | G+H^R}$. As explained in {\cite{Selinger2021}}, sums do not
in general respect equivalence of games; however, when the games are
passable, they do. Specifically, if $G,G',H,H'$ are passable and $G\eq
G'$ and $H\eq H'$, then $G+H$ and $G'+H'$ are passable and $G+H\eq
G'+H'$.

\begin{definition}[Map operation]
  Let $G$ be a game over $A$, and let $f:A\to B$ be a monotone
  function between atom posets. Then $f(G)$ is a game over $B$,
  defined by applying $f$ to all atomic positions in $G$. More
  formally, $f(G)$ is defined recursively as follows: $f([a]) =
  [f(a)]$ and $f(\g{G^L|G^R}) = \g{f(G^L)|f(G^R)}$.
\end{definition}

It is often convenient to combine the sum operation with the map
operation. For example, if $G,H,K$ are games over posets $A,B,C$ and
$f:A\times B\times C\to D$ is a monotone function, then $f(G+H+K)$ is
a game over $D$.

If $S$ and $T$ are sets of games, we say that $S$ and $T$ are {\em
  left equivalent} if for all $G^L,G^R$, we have
$\g{S,G^L|G^R}\eq\g{T,G^L|G^R}$. Right equivalence is defined dually.
We define $\upl S = \g{\top|\g{S|\bot}}$. Then $\upl S$ is
left equivalent to $S$. Dually, $\downr S = \g{\g{\bot|S}|\top}$ is
right equivalent to $S$.

\subsection{Monotone set coloring games}

Fix a partially ordered set $A$ of atoms. Let $\Bool=\s{\top,\bot}$ be
the set of booleans. Here, $\bot$ denotes ``false'' or ``bottom'', and
$\top$ denotes ``true'' or ``top'', with the natural order
$\bot < \top$. If $X$ is any set, we write $\Bool^X$ for the set of
functions from $X$ to $\Bool$. These functions are equipped with the
pointwise order, i.e., $f\leq g$ if and only if for all $x\in X$,
$f(x)\leq g(x)$.

\begin{definition}
  A {\em monotone set coloring game} over $A$ is a pair $S=(|S|,\payoff{S})$,
  where $|S|$ is a finite set and $\payoff{S}:\Bool^{|S|}\to A$ is a monotone
  function.  The set $|S|$ is called the {\em carrier} of the game,
  and its elements are called {\em cells}. The function $\payoff{S}$ is
  called the {\em payoff function}. We sometimes write $S:A$ to
  indicate that $S$ is a monotone set coloring game over $A$.
\end{definition}

If $S:A$ is a monotone set coloring game, a {\em position} is a map $p
: |S| \to \s{\top,\bot,\star}$. Here, $\bot$ indicates a cell occupied
by the right player, $\top$ indicates a cell occupied by the left
player, and $\star$ indicates an empty cell. When $p(c)=\top$, we also
say that the cell $c$ is {\em colored} with color $\top$, and
similarly when $p(c)=\bot$. We write $\emptypos_S$ for the all-empty
position. A position is {\em atomic} if there are no empty cells.

To play the game, the players take turns coloring a cell in their own
color, until an atomic position is reached, at which point the payoff
function is used to assign a value to the final position. The left
player's goal is to maximize the payoff, and the right player's goal
is to minimize it.

If $p$ is a non-atomic position, we write $p^L$ for a typical left
option, i.e., a position obtained from $p$ by coloring exactly one
empty cell with $\top$. Similarly, $p^R$ is a typical right option,
i.e., a position obtained from $p$ by coloring exactly one empty cell
with $\bot$. Then a monotone set coloring game can be regarded as a
combinatorial game in an obvious way, which is made precise in the
following definition.

\begin{definition}
  Let $S:A$ be a monotone set coloring game. To each position $p$ of
  $S$, we associate a combinatorial game $\sem{p}$ over $A$, defined
  recursively as follows: $\sem{p} = [f(p)]$ if $p$ is atomic, and
  $\sem{p} = \g{\sem{p^L}|\sem{p^R}}$ if $p$ is non-atomic. We
  identify the game $S$ itself with its initial position, i.e., we
  define $\sem{S} = \sem{\emptypos_S}$.
\end{definition}

\begin{example}\label{exa:notation}
  In this paper, we will sometimes describe a monotone set coloring
  game by a notation such as the following.
  \[
  a: \s{\ciw\ciw\ciw\cib\cib,\cib\ciw\cib\ciw\cib,\cib\cib\ciw\ciw\ciw},\quad
  b: \s{\ciw\ciw\cib\ciw\ciw,\ciw\cib\ciw\cib\ciw},
  \]
  This notation is interpreted as follows. The game has 5 cells, and
  is over the poset $P_4=\s{\top,a,b,\bot}$, where $\top$ is the top
  element, $\bot$ is the bottom element, and $a,b$ are incomparable.
  \[
  \begin{tikzpicture}[scale=0.4]
    \node at (-3,1) {$P_4:$};
    \node(top) at (0,2) {$\top$};
    \node(a) at (-1.5,0) {$a$};
    \node(b) at (1.5,0) {$b$};
    \node(bot) at (0,-2) {$\bot$};
    \draw (bot) -- (a) -- (top);
    \draw (bot) -- (b) -- (top);
  \end{tikzpicture}
  \]
  We call the left player ``Black'' and the right player ``White'',
  and we write $\cib$ and $\ciw$ to denote a cell occupied by Black
  and White, respectively.  If the final position $p$ satisfies
  $p\geq\ciw\ciw\ciw\cib\cib$ or $p\geq\cib\ciw\cib\ciw\cib$ or
  $p\geq\cib\cib\ciw\ciw\ciw$, then Black achieves at least outcome
  $a$. If $p\geq\ciw\ciw\cib\ciw\ciw$ or $p\geq\ciw\cib\ciw\cib\ciw$,
  then Black achieves at least outcome $b$.  If both these conditions
  hold, then the outcome is $\top$, and if neither holds, the outcome
  is $\bot$. For example, if the final position is
  $p=\ciw\cib\ciw\cib\cib$, the outcome is $\top$ because
  $p\geq \ciw\ciw\ciw\cib\cib$ and $p\geq\ciw\ciw\cib\ciw\ciw$.  If
  the final position is $p=\cib\cib\ciw\ciw\cib$, the outcome is $a$
  because $p\geq\cib\cib\ciw\ciw\ciw$, but $p$ is not above
  $p\geq\ciw\ciw\cib\ciw\ciw$ or $p\geq\ciw\cib\ciw\cib\ciw$. If the
  final position is $p=\cib\ciw\ciw\ciw\cib$, then the outcome is
  $\bot$.

  The combinatorial value of the above game is
  $\g{a,\g{\top|b}|\g{a|\bot},b}$, as can be checked by computation.
\end{example}

\begin{example}
  The game of Hex is a monotone set coloring game. For example, Hex of
  size $2\times 2$ can be described as the monotone set coloring game
  $S:\Bool$ where $|S|=\s{a1, a2, b1, b2}$ and
  \[
  \payoff{S}(p) = \begin{choices}
    \top & \mbox{if $p(a1)=\top$ and $p(a2)=\top$,} \\
    \top & \mbox{if $p(b1)=\top$ and $p(a2)=\top$,} \\
    \top & \mbox{if $p(b1)=\top$ and $p(b2)=\top$,} \\
    \bot & \mbox{otherwise.}
  \end{choices}
  \]
  Here again, we identify the color $\top$ with black and $\bot$ with
  white. The winning condition is exactly the usual one, i.e., Black
  wins if and only if the black edges are connected by black stones.
  \[
  \begin{hexboard}
    \board(2,2)
    \cell(1,-0.2)\label{a}
    \cell(2,-0.2)\label{b}
    \cell(-0.2,1)\label{1}
    \cell(-0.2,2)\label{2}
  \end{hexboard}
  \]
  Or in the notation of Example~\ref{exa:notation}, this game can be
  concisely described as
  \[
  \top : \s{\cib\cib\ciw\ciw, \ciw\cib\cib\ciw, \ciw\ciw\cib\cib}.
  \]
  The combinatorial value of this game is $\g{\top|\bot}$, i.e., it is
  a first-player win.
\end{example}

\begin{remark}\label{rem:monotone-set-is-monotone}
  If $S$ is a monotone set coloring game, $\sem{S}$ is always a
  monotone game. This is clear because in a monotone set coloring
  game, having an extra cell is never bad for a player.
  In particular, it follows that $\sem{S}$ is passable.
\end{remark}

\section{The main result}

In this section, we will prove our main result, namely, that every
finite passable game is realizable as a monotone set coloring game.

\begin{definition}
  Let $G$ be a combinatorial game over an atom poset $A$. Then $G$ is
  {\em realizable as a monotone set coloring game}, or briefly {\em
    realizable}, if there exists a monotone set coloring game $S:A$
  such that $G\eq \sem{S}$.
\end{definition}

Note that by Remark~\ref{rem:monotone-set-is-monotone}, every
realizable game is equivalent to a passable one. Our main result is
the following theorem, which states the converse.

\begin{theorem}[Realizability theorem for monotone set coloring games]\label{thm:main}
  All finite passable games are realizable as monotone set coloring
  games.
\end{theorem}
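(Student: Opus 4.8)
The plan is to reduce to monotone games and then induct on game structure, assembling the realizing board from boards for the options. By Theorem~\ref{thm:fundamental} every finite passable game is equivalent to a monotone game, and since realizability is defined up to $\eq$ it suffices to realize finite monotone games (I would check along the way that the monotone representative can be taken finite). I then induct on the number of positions of a monotone game $G$. The atomic base case $[a]$ is realized by the empty board: carrier $\emptyset$, payoff the constant $a$, so that the all-empty position is already atomic and $\sem{S}=[a]$. If $G=\g{L|R}$ is composite, every option is a monotone game with fewer positions, hence realizable by hypothesis, and the inductive step is to build a board realizing $\g{L|R}$ out of boards realizing the elements of $L$ and $R$.

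Two routine ingredients support the step. Realizability is closed under the map and sum operations: given $S:A$ and monotone $f$, the board $f(S)$ with payoff $f\circ\payoff{S}$ realizes $f(\sem{S})$; and the disjoint union of carriers with payoff $\payoff{S}\times\payoff{T}$ realizes $\sem{S}+\sem{T}$, since alternately colouring cells of the union is exactly interleaved play. The other ingredient is a family of single control-cell \emph{gate lemmas}. For instance, to realize $\g{X|\bot}$ from a board for $X$ with carrier $C$, adjoin one cell $d$ and set $\payoff{}(p)=\payoff{X}(p|_C)$ when $p(d)=\top$ and $\payoff{}(p)=\bot$ otherwise; this is monotone, colouring $d$ realizes $\sem{X}$ for Black and $[\bot]$ for White, while any move inside $C$ made before $d$ is coloured yields (by an inner induction on $|C|$) a value $\g{X^L|\bot}$ or $\g{X^R|\bot}$. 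One checks directly from the definitions that $\g{G^L|\bot}\leq G$ and that $[\bot]$ lies below every value, so all such moves are dominated and disappear under the standard simplification, leaving exactly $\g{X|\bot}$. The duals give $\g{\top|X}$, $\g{\bot|X}$, and $\g{X|\top}$.

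With gates available, I would cut the multiplicity of options using the collapse operations: by left and right equivalence $G\eq\g{\upl L|\downr R}$, where $\upl L=\g{\top|\g{L|\bot}}$ and $\downr R=\g{\g{\bot|R}|\top}$, so that realizing $G$ reduces to realizing the \emph{choice boards} $\g{L|\bot}$ and $\g{\bot|R}$ (the gates then supply the outer $\top$/$\bot$ wrappers) together with the final assembly $\g{\upl L|\downr R}$. This assembly and the choice boards are the heart of the matter, and I expect them to be the main obstacle, because of a genuine tension between monotonicity and incomparable atoms. If Black selected among options by colouring one of several disjoint control cells, monotonicity would force the value at a position with two control cells black to lie above both chosen values, hence to be an upper bound of them; when the relevant atoms are incomparable this pushes the value up toward $\top$ and lets Black overshoot the intended choice. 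Indeed, a direct computation shows that the two-cell board for $\g{[a],[b]|\bot}$ realizes $\g{\g{\top|a},\g{\top|b}|\g{a|\bot},\g{b|\bot}}$, which is strictly above $\g{a,b|\bot}$; so no payoff defined by a pointwise join can succeed, and the single-cell gate that handled $\top$/$\bot$-cushioned options fails here precisely because neither side is pinned to a constant.

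The resolution I would pursue is to realize the choice boards not by a payoff that computes a join, but by a board carrying auxiliary \emph{referee} cells whose influence is felt only through the alternation of play: once a player commits to one option, the opponent's forced intervening move is what pins the value of any attempted second commitment back down, so that every overshooting line is reversible or dominated and is removed by the domination-and-reversibility calculus rather than by any inequality of payoff functions. Concretely I would arrange that lighting a second control cell can be answered by a referee cell restoring the lower outcome, and then prove $\sem{S}\eq\g{L|\bot}$ (and, by the same device, the two-sided assembly $\g{\upl L|\downr R}$) entirely at the level of game values. Carrying out this construction for an arbitrary finite $L$ over an arbitrary atom poset, and discharging the reversibility bookkeeping, is where the real work lies; once the choice boards and their assembly are realized, combining them with the map, sum, and gate constructions and the collapse identity completes the induction, and hence Theorem~\ref{thm:main}.
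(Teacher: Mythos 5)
Your scaffolding matches the paper's own: reduce to monotone games via Theorem~\ref{thm:fundamental}, induct on positions, use closure of realizability under sum and map (your constructions are exactly the paper's Lemmas~\ref{lem:sum-realizable} and~\ref{lem:map-realizable}), realize the forcing wrappers $\g{\top|X}$ and $\g{X|\bot}$ by adjoining a single control cell (this is the paper's $\g{\top|a}+G$ and $\g{a|\bot}+G$, i.e.\ Lemma~\ref{lem:gadgets} combined with the one-cell boards of Lemma~\ref{lem:realizable-values}), and collapse the option sets via $G\eq\g{\upl L|\downr R}$ (the move at the start of Lemma~\ref{lem:nary-coupling}). You also diagnose the obstacle correctly: a board on which Black selects among options by coloring one of several control cells realizes the choice-gadget value $\g{\g{\top|a},\g{\top|b}|\g{a|\bot},\g{b|\bot}}$ rather than $\g{a,b|\bot}$. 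But at exactly this point --- realizing the one-sided choice $\g{L|\bot}$ and the two-sided assembly $\g{\upl L|\downr R}$ --- your proof stops. The ``referee cell'' construction is never exhibited: no carrier, no payoff function, no verification. You say yourself that carrying it out for arbitrary finite $L$ over an arbitrary atom poset ``is where the real work lies''; that is a restatement of the theorem, not a proof of it, so the induction does not close.

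It is worth seeing how the paper discharges exactly these two debts, because the resolutions are not referee cells. First, the 2-cell board you computed and discarded as an overshoot is precisely the right gadget; the cure for the overshoot is pre-composition with forcing, not a new board. Applying the choice gadget to $\g{G|\bot}$ and $\g{H|\bot}$ (rather than to $G$ and $H$) gives left options $\g{\top|\g{G|\bot}}=\upl G$, which is left equivalent to $G$, and right options $\g{\g{G|\bot}|\bot}\eq\bot$, so the value collapses to $\g{G,H|\bot}$ (Lemma~\ref{lem:one-sided-choice}); iterating this halving gives $n$-ary choice. Second, the assembly $\g{\upl L|\downr R}$ is handled by showing it is locally monotone --- a step that uses semi-monotonicity of $G$, which you never invoke --- and then, by a gift-horse argument, it is equivalent to the coupling gadget $\g{G',\g{\top|H'}|\g{G'|\bot},H'}$. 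Realizing that gadget genuinely requires a new board: the paper's 5-cell game over $P_4$ from Example~\ref{exa:notation}, whose value is verified by computation, together with the substitution lemma (Lemma~\ref{lem:gadgets}) showing that summing with this board and applying the monotone collapse map really implements substitution of $G',H'$ for the atoms $a,b$. That last point is not automatic --- the paper notes that $\g{\g{\top|a}|a}+G\eq G$, so ``sum with a fixed board, then map'' does not in general commute with substitution --- and it is exactly the ``reversibility bookkeeping'' your plan defers. Until you exhibit a concrete coupling board (or an equivalent device) and prove its value, the heart of the theorem remains open in your write-up.
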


The rest of this section is devoted to the proof of this theorem. The
proof proceeds in three steps: in Section~\ref{ssec:new-gadgets}, we
show that the class of finite passable games can be characterized, up
to equivalence, as the smallest class of games containing the atomic
games and closed under four operations, which we call gadgets. In
Section~\ref{ssec:new-closure}, we show that any class of games that
contains a small number of specific simple games and is closed under
the sum and map operations is automatically closed under the four
gadgets. In Section~\ref{ssec:new-proof}, we show that the class of
games that are realizable as monotone set coloring games satisfies the
above conditions, and therefore contains all passable games up to
equivalence. This proves the theorem.

We note that Sections~\ref{ssec:new-gadgets} and
{\ref{ssec:new-closure}} are not about monotone set coloring games;
they contain results about passable games that are of independent
interest. Only the results of Section~\ref{ssec:new-proof} are
specifically about monotone set coloring games.

\subsection{A characterization of finite passable games}
\label{ssec:new-gadgets}

Fix an atom poset $A$. By a {\em gadget}, we mean an operation that
produces a game over $A$ from one or more smaller such games. The
following proposition characterizes the class of finite passable games
as being the smallest class that contains the atomic games and is
closed under four specific gadgets.

\begin{proposition}\label{prop:gadgets}
  Up to equivalence of games, the class of finite passable games over
  $A$ is the smallest class $\Cc$ of games containing the atomic games
  and closed under the following four gadgets:
  \begin{enumerate}\alphalabels
  \item Left forcing gadget: if $G\in \Cc$, then $\g{\top|G}\in \Cc$.
    \label{prop:gadgets-a1}
  \item Right forcing gadget: if $G\in \Cc$, then $\g{G|\bot}\in \Cc$.
    \label{prop:gadgets-a2}
  \item Choice gadget: if $G,H\in\Cc$, then
    $\g{\g{\top|G},\g{\top|H}|\g{G|\bot},\g{H|\bot}}\in\Cc$.
    \label{prop:gadgets-b}
  \item Coupling gadget: if $G,H\in\Cc$, then
    $\g{G,\g{\top|H}|\g{G|\bot},H}\in\Cc$.
    \label{prop:gadgets-c}
  \end{enumerate}
\end{proposition}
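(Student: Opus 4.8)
The statement identifies a class defined by a closure property with the class of finite passable games, up to equivalence, so I would prove two inclusions: \emph{soundness}, that every game in $\Cc$ is finite and passable, and \emph{completeness}, that every finite passable game is equivalent to a member of $\Cc$. Since $\Cc$ is by definition the smallest class containing the atoms and closed under the four gadgets, these two inclusions together yield the claimed characterization.

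For soundness I would induct on the generation of $\Cc$. Atomic games are passable, so it suffices to check that each gadget sends passable inputs to passable outputs. The positions of a gadget output are the (already locally passable) positions of its inputs together with a bounded number of new positions, namely the output itself and the auxiliary games $\g{\top|G}$ and $\g{G|\bot}$; only these need to be examined. Each check unfolds the mutual recursion defining $\leq$ and $\tri$ and relies on three facts: that $[\bot]\leq H\leq[\top]$ for every game $H$ (so $\top$ is always a good left option and $\bot$ always a good right option), that $\leq$ is reflexive (both provable by a routine induction on rank), and that the inputs satisfy $H\tri H$. For instance $\g{\top|G}$ has the good left option $\top$ since $\g{\top|G}\leq[\top]$ reduces to $G\leq[\top]$; the choice gadget has the good right option $\g{G|\bot}$; and the coupling gadget $\g{G,\g{\top|H}|\g{G|\bot},H}$ has the good left option $\g{\top|H}$, because the only nonvacuous requirement is that the output $D$ satisfy $D\tri H$, which holds via the right option $H$ of $D$ together with reflexivity $H\leq H$. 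Finiteness is clearly preserved.

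For completeness I would first apply Theorem~\ref{thm:fundamental} to replace the given game by an equivalent monotone one, and then induct on rank (the length of the longest play). Atomic games lie in $\Cc$. For a composite $G=\g{L|R}$, every option is a shorter passable game, hence by the induction hypothesis equivalent to a member of $\Cc$; using substitutivity of $\eq$ for options (part of the canonical-form theory of \cite{Selinger2021}) I may assume $L,R\subseteq\Cc$. The task is then to realize an arbitrary passable $\g{L|R}$ with $L,R\subseteq\Cc$ finite and nonempty. The key device is left/right equivalence: $L$ is left equivalent to the singleton $\{\upl L\}$ and $R$ is right equivalent to $\{\downr R\}$, so $\g{L|R}\eq\g{\upl L|\downr R}$, collapsing each side to a single option. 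The choice and coupling gadgets are exactly the combinators that merge two $\Cc$-options into one while staying in $\Cc$, so that an iterated pairwise merge realizes $\upl L$ and $\downr R$ inside $\Cc$. Finally, a single-option game $\g{\ell|\rho}$ that is locally monotone is equivalent to the coupling gadget $\g{\ell,\g{\top|\rho}|\g{\ell|\bot},\rho}$: this follows from the gift horse lemma, since a good right option $\rho\leq\g{\ell|\rho}$ makes $\g{\top|\rho}$ a left gift horse and a good left option $\g{\ell|\rho}\leq\ell$ makes $\g{\ell|\bot}$ a right gift horse. Thus the consolidated game lands in $\Cc$.

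The main obstacle is the completeness step of reconciling the bounded arity of the four gadgets with the unbounded option sets of a general passable game. Concretely, I expect two delicate points: showing that the pairwise merge of options can indeed be expressed through the choice and coupling gadgets while preserving value, so the consolidation never leaves $\Cc$; and ensuring that enough monotonicity survives the consolidation for the final gift-horse identity to apply, i.e. that the collapsed game $\g{\upl L|\downr R}$ inherits a good option on each side from the monotone representative and is therefore locally monotone. Managing these two points, rather than the soundness direction or the bookkeeping of the induction, is where the real work lies.
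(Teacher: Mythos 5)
Your proposal is correct and takes essentially the same route as the paper: your soundness direction is the paper's Lemma~\ref{lem:passable-closure}, and your completeness direction (reduce to monotone games via Theorem~\ref{thm:fundamental}, induct on game structure, collapse the option sets to $\upl L$ and $\downr R$ by pairwise merging through the choice gadget, then conclude with the gift-horse coupling identity) is exactly the chain of Lemmas~\ref{lem:one-sided-choice}, \ref{lem:nary-one-sided-choice}, \ref{lem:coupling} and~\ref{lem:nary-coupling}. Even the two delicate points you single out are resolved in the paper just as you sketch them: the merge is the choice gadget applied to $\g{G|\bot}$ and $\g{H|\bot}$ combined with left equivalence of $\upl$, and local monotonicity of $\g{\upl L|\downr R}$ is inherited from a good option of the (semi-)monotone representative.
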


We collectively call the four gadgets of
Proposition~\ref{prop:gadgets} the {\em primitive gadgets}. By virtue
of the proposition, they can be viewed as the basic building blocks of
all finite passable games.  Before we prove the proposition, we
briefly comment on the meaning of the primitive gadgets. The forcing
gadgets force one of the players to immediately make a move. The
choice gadget allows the player whose turn it is to choose between
playing $G$ and $H$, where it will then be that player's turn
again. The coupling gadget is superficially similar to the choice
gadget, in that it, too, allows the player whose turn it is to choose
between $G$ and $H$. However, whose turn it will be following this
choice does not depend on which player made the choice: it is always
the right player's turn if $G$ is chosen and the left player's turn if
$H$ is chosen.

We first prove the ``easy'' direction of
Proposition~\ref{prop:gadgets}, namely, that the class of finite passable
games has the required closure properties.

\begin{lemma}\label{lem:passable-closure}
  All atomic games are passable, and the class of finite passable
  games is closed under the four gadgets of
  Proposition~\ref{prop:gadgets}.
\end{lemma}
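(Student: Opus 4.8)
The plan is to verify directly that each construction preserves passability, reducing in every case to checking \emph{local} passability of the single new top-level position. Recall that a game is passable exactly when all of its positions are locally passable, and that $G$ is locally passable iff $G\tri G$, i.e.\ iff $G$ has a good left or good right option. For a gadget built from passable inputs $G$ (and $H$), every position strictly inside $G$ or $H$ is already locally passable by hypothesis; and in the choice and coupling gadgets the intermediate positions $\g{\top|G}$, $\g{\top|H}$, $\g{G|\bot}$, $\g{H|\bot}$ are themselves outputs of the two forcing gadgets, so it is convenient to treat the gadgets in the order (a), (b), (c), (d), allowing the forcing cases to be reused. Thus for each gadget only the new root must be shown locally passable. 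Two standard order facts will do the bookkeeping: $[\bot]\leq G\leq[\top]$ for every game $G$, and reflexivity $G\leq G$; both follow by an immediate induction on game structure from the definitions of $\leq$ and $\tri$ (and are recorded in \cite{Selinger2021}).

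The atomic case is immediate: an atomic game is locally passable by fiat and has no other positions. For the left forcing gadget, the left option $[\top]$ of $\g{\top|G}$ is good, since goodness here means $\g{\top|G}\leq[\top]$, an instance of the upper bound $H\leq[\top]$; dually the right option $[\bot]$ of $\g{G|\bot}$ is good because $[\bot]\leq\g{G|\bot}$. Hence both forcing outputs are locally passable, and since their remaining positions lie in $G$, they are passable.

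For the choice and coupling gadgets I would exhibit a good \emph{right} option of the root in each case, namely $\g{G|\bot}$. For the choice gadget $C=\g{\g{\top|G},\g{\top|H}|\g{G|\bot},\g{H|\bot}}$, I claim $\g{G|\bot}\leq C$. Unwinding the definition of $\leq$, the left-option clause reduces to $G\tri C$, which holds via the left option $\g{\top|G}$ of $C$ together with the auxiliary inequality $G\leq\g{\top|G}$ (itself verified in one step from $H\leq[\top]$ and the local passability of $G$); the right-option clause reduces to $\g{G|\bot}\tri\g{G|\bot}$ (local passability, already known) and $\g{G|\bot}\tri\g{H|\bot}$, the latter holding because the right option $[\bot]$ of $\g{G|\bot}$ satisfies $[\bot]\leq\g{H|\bot}$. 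For the coupling gadget $K=\g{G,\g{\top|H}|\g{G|\bot},H}$, the same option $\g{G|\bot}$ works: the left-option clause of $\g{G|\bot}\leq K$ reduces to $G\tri K$, witnessed by the left option $G$ of $K$ and reflexivity $G\leq G$, while the right-option clause reduces to $\g{G|\bot}\tri\g{G|\bot}$ and $\g{G|\bot}\tri H$, the latter again from $[\bot]\leq H$. In each case the root thus has a good option and is locally passable, and all other positions are passable by the inputs and the forcing cases, so the whole game is passable.

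The work is entirely mechanical once the correct good option is chosen, so the only real obstacle is bookkeeping: picking the right witness (here the right option $\g{G|\bot}$ for both the choice and coupling roots) and carrying the $\leq/\tri$ mutual recursion through to the base facts $[\bot]\leq G\leq[\top]$, $G\leq G$, and $G\leq\g{\top|G}$. I expect no genuine difficulty beyond this, since passability is a purely local, per-position condition and the gadgets each add exactly one new recursion layer on top of inputs that are already passable.
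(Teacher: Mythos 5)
Your proposal is correct and takes essentially the same approach as the paper: atomic games are passable by definition, the forcing gadgets are handled by the good options $[\top]$ and $[\bot]$, and for the choice and coupling gadgets one observes that all options are already passable (being outputs of the forcing gadgets or the inputs themselves), so it suffices to exhibit a single good option at the root. The only difference is cosmetic: you exhibit the good \emph{right} option $\g{G|\bot}$ in both cases, whereas the paper exhibits the good \emph{left} options $\g{\top|G}$ and $\g{\top|H}$ respectively; this is a dual choice and changes nothing of substance.
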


\begin{proof}
  Atomic games are passable by definition. Now suppose $G$ and $H$ are
  passable. Then $\g{\top|G}$ is clearly passable, since all its
  options are passable and $\top$ is a good left option. The case of
  $\g{G|\bot}$ is dual. To show that
  $K=\g{\g{\top|G},\g{\top|H}|\g{G|\bot},\g{H|\bot}}$ is passable,
  note that we just proved that all of its options are passable, so it
  suffices to show that $K$ has a good left option. Since $G$ is
  passable, we have $G\tri G$, hence $\g{G|\bot}\leq G$, hence
  $K\leq\g{\top|G}$. Hence $\g{\top|G}$ is a good left option of
  $K$. To show that $J=\g{G,\g{\top|H}|\g{G|\bot},H}$ is passable, it
  likewise suffices to show that there exists a good left option;
  clearly $\g{\top|H}$ is such an option.
\end{proof}

Note that Lemma~\ref{lem:passable-closure} holds ``on the nose'',
i.e., not just up to equivalence.

To finish the proof of Proposition~\ref{prop:gadgets}, we must show
that if a class of games has the required closure properties, then it
contains all finite passable games up to equivalence. Equivalently, we
must show that every finite passable game can be built, up to
equivalence, from atomic games by means of the four gadgets. To
facilitate the proof, we first show that several other useful gadgets
can be constructed from the primitive ones. Throughout the following
lemmas, let $\Cc$ be a class of games that is closed under the four
primitive gadgets of Proposition~\ref{prop:gadgets} and under
equivalence.

\begin{lemma}[One-sided binary choice gadget]\label{lem:one-sided-choice}
  If $G,H\in\Cc$, then $\g{G,H|\bot}\in\Cc$.
\end{lemma}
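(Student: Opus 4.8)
The plan is to realize $\g{G,H|\bot}$ up to equivalence by applying the \emph{choice} gadget not to $G$ and $H$ directly, but to their right-forced versions. Write $G'=\g{G|\bot}$ and $H'=\g{H|\bot}$. Since $G,H\in\Cc$, the right forcing gadget gives $G',H'\in\Cc$, and then the choice gadget gives
\[
K \;=\; \g{\g{\top|G'},\g{\top|H'}|\g{G'|\bot},\g{H'|\bot}}\;\in\;\Cc .
\]
I claim $K\eq\g{G,H|\bot}$, which suffices since $\Cc$ is closed under equivalence.

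The first step would be to collapse the two right options of $K$ to $\bot$. Because $G'=\g{G|\bot}$ has $\bot$ as a right option and $\bot\leq\bot$, we get $G'\tri\bot$; hence $\g{G'|\bot}\leq\bot$, and since $\bot\leq\g{G'|\bot}$ holds for every game, $\g{G'|\bot}\eq\bot$, and likewise $\g{H'|\bot}\eq\bot$. By the gift horse lemma (applied on the right) an option may be replaced by an equivalent game without changing the value, and option sets absorb duplicates, so replacing both right options of $K$ by $\bot$ yields $K\eq\g{\g{\top|G'},\g{\top|H'}|\bot}$.

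The second step simplifies the left options. Note that $\g{\top|G'}=\g{\top|\g{G|\bot}}=\upl\s{G}$ and $\g{\top|H'}=\upl\s{H}$, which the excerpt tells us are left equivalent to $\s{G}$ and $\s{H}$ respectively. Applying the left equivalence of $\upl\s{G}$ with $\upl\s{H}$ as an additional left option and $\bot$ as the right option, and then the left equivalence of $\upl\s{H}$ with $G$ as an additional left option and $\bot$ on the right, gives $\g{\upl\s{G},\upl\s{H}|\bot}\eq\g{G,\upl\s{H}|\bot}\eq\g{G,H|\bot}$. Chaining these equivalences yields $K\eq\g{G,H|\bot}$, and hence $\g{G,H|\bot}\in\Cc$.

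The one genuinely delicate point is the choice of construction. Applying the choice gadget to $G$ and $H$ themselves would leave the right options $\g{G|\bot},\g{H|\bot}$, which do \emph{not} reduce to $\bot$ and in fact produce a strictly larger value; interposing the extra right forcing is exactly what makes each right branch collapse to $\bot$ while keeping each left branch left equivalent to $G$, respectively $H$. Everything after that is routine bookkeeping with the gift horse lemma and the left-equivalence property of $\upl$, and I would stress that none of these steps uses passability of $G$ or $H$, so the argument is valid for an arbitrary class $\Cc$ closed under the four primitive gadgets and under equivalence.
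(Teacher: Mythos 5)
Your proof is correct and takes essentially the same route as the paper's: both apply the choice gadget to the right-forced games $\g{G|\bot}$ and $\g{H|\bot}$, observe that the resulting right options $\g{\g{G|\bot}|\bot}$ and $\g{\g{H|\bot}|\bot}$ collapse to $\bot$, and use the left equivalence of $\upl G$ with $G$ (and $\upl H$ with $H$) to conclude $K\eq\g{G,H|\bot}$. The only difference is that you spell out the routine verifications (e.g.\ why $\g{\g{G|\bot}|\bot}\eq\bot$ and how the left equivalences chain) that the paper leaves implicit.
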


\begin{proof}
  Assume $G,H\in\Cc$. By the right forcing gadget, we have
  $\g{G|\bot}\in\Cc$ and $\g{H|\bot}\in\Cc$. Applying the choice
  gadget to $\g{G|\bot}$ and $\g{H|\bot}$, we get
  \[
  K = \g{\g{\top|\g{G|\bot}},\g{\top|\g{H|\bot}}|\g{\g{G|\bot}|\bot},\g{\g{H|\bot}|\bot}}\in\Cc.
  \]
  Note that $\g{\top|\g{G|\bot}} = \upl G$ is left equivalent to $G$,
  and $\g{\g{G|\bot}|\bot}$ is equivalent to $\bot$, and similarly for
  $H$. Therefore, $K \eq \g{G,H|\bot}$, and since $\Cc$ is closed
  under equivalence, $\g{G,H|\bot}\in\Cc$ as claimed.
\end{proof}

\begin{lemma}[One-sided $n$-ary choice gadget]\label{lem:nary-one-sided-choice}
  If $G_1,\ldots,G_n\in\Cc$, then $\g{G_1,\ldots,G_n|\bot}\in\Cc$.
\end{lemma}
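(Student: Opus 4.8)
The plan is to induct on $n$, peeling off a single option and re-running the construction of Lemma~\ref{lem:one-sided-choice} with the $(n-1)$-ary gadget playing the role of its second argument. The base case $n=1$ is immediate, since $\g{G_1|\bot}\in\Cc$ by the right forcing gadget. For the inductive step with $n\geq 2$, I assume the claim for $n-1$, so that $Y:=\g{G_2,\ldots,G_n|\bot}\in\Cc$, and I set $X:=\g{G_1|\bot}\in\Cc$, again by the right forcing gadget.

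Applying the choice gadget to $X$ and $Y$ then puts
\[
K=\g{\g{\top|X},\g{\top|Y}|\g{X|\bot},\g{Y|\bot}}\in\Cc.
\]
I would simplify $K$ exactly as in Lemma~\ref{lem:one-sided-choice}. On the left, $\g{\top|X}=\upl\{G_1\}$ and $\g{\top|Y}=\upl\{G_2,\ldots,G_n\}$; since $\upl S$ is left equivalent to $S$, substituting these left-equivalent sets merges the two left options into the single set $\{G_1,\ldots,G_n\}$. On the right, the same short computation as in the binary lemma gives $\g{X|\bot}\eq\bot$ and $\g{Y|\bot}\eq\bot$ (each uses only that $X$ and $Y$ have $\bot$ among their right options). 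Hence $K\eq\g{G_1,\ldots,G_n|\bot}$, and closure of $\Cc$ under equivalence completes the induction.

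The step I expect to be the real trap is the temptation to argue by nesting the binary gadget directly, i.e.\ to claim $\g{\g{G_1,\ldots,G_{n-1}|\bot},G_n|\bot}\eq\g{G_1,\ldots,G_n|\bot}$. This is \emph{false}: the inner left option $\g{G_1,\ldots,G_{n-1}|\bot}$ is reversible through its right option $\bot$ (which lies below the whole game), and bypassing a reversible option whose reverse has no left options simply deletes it, so the nested game collapses to $\g{G_n|\bot}$ rather than to the desired $n$-ary choice. The $\upl$-wrapping in the construction above is precisely what avoids this: $\upl S$ contributes the entire set $S$ as left options via left equivalence, instead of as a single composite left option that would reverse away. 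The only facts used are that $\upl S$ is left equivalent to $S$ (recorded in the preliminaries) and the collapse $\g{Z|\bot}\eq\bot$, established by the same computation as in Lemma~\ref{lem:one-sided-choice}, so no new calculation is required.
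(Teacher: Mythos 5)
Your proof is correct, and it rests on the same core mechanism as the paper's: an induction on $n$ whose step combines the choice gadget with the fact that $\upl S=\g{\top|\g{S|\bot}}$ is left equivalent to $S$, which is exactly what stops the composite left option from reversing out through $\bot$. The two arguments differ only in bookkeeping. The paper keeps $n=2$ as a separate base case and, for $n\geq 3$, first applies the left forcing gadget to the $(n-1)$-ary gadget to obtain $\upl(G_1,\ldots,G_{n-1})\in\Cc$, then invokes Lemma~\ref{lem:one-sided-choice} as a black box on the pair $\upl(G_1,\ldots,G_{n-1})$, $G_n$. You instead feed $\g{G_1|\bot}$ and the \emph{unwrapped} $(n-1)$-ary gadget $Y=\g{G_2,\ldots,G_n|\bot}$ directly into the primitive choice gadget and re-run the simplification from the proof of Lemma~\ref{lem:one-sided-choice} with these asymmetric inputs; this is legitimate because the paper's left-equivalence fact for $\upl S$ is stated for arbitrary sets $S$ and the collapse $\g{Z|\bot}\eq\bot$ only needs $Z$ to have $\bot$ among its right options. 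Your variant saves the separate $n=2$ base case and the explicit left-forcing step, at the cost of repeating the simplification computation rather than citing it. Your diagnosis of the trap --- that the naive nesting $\g{\g{G_1,\ldots,G_{n-1}|\bot},G_n|\bot}$ collapses to $\g{G_n|\bot}$ because the inner gadget reverses through $\bot$ --- is exactly right, and it is precisely the reason both your proof and the paper's need the $\upl$ wrapper.
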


\begin{proof}
  By induction on $n\geq 1$. The base cases are the right forcing
  gadget for $n=1$ and the one-sided binary choice gadget of
  Lemma~\ref{lem:one-sided-choice} for $n=2$. Now consider $n\geq 3$. By
  the induction hypothesis,
  $\g{G_1,\ldots,G_{n-1}|\bot}\in\Cc$. Therefore, by the left forcing
  gadget, $\g{\top|\g{G_1,\ldots,G_{n-1}|\bot}} =
  \upl(G_1,\ldots,G_{n-1})\in\Cc$. By Lemma~\ref{lem:one-sided-choice},
  $\g{\upl(G_1,\ldots,G_{n-1}),G_n|\bot}\in\Cc$. Since
  $\upl(G_1,\ldots,G_{n-1})$ is left equivalent to
  $G_1,\ldots,G_{n-1}$, it follows that
  $\g{G_1,\ldots,G_n|\bot}\in\Cc$ as claimed.
\end{proof}

We need two more lemmas before we can prove Proposition~\ref{prop:gadgets}.

\begin{lemma}[Coupling lemma]\label{lem:coupling}
  If $G, H\in\Cc$ and $K = \g{G|H}$ is locally monotone, then
  $K\in\Cc$.
\end{lemma}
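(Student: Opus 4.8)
The plan is to recognize the output of the coupling gadget as nothing more than $K=\g{G|H}$ with two extra options attached, each of which turns out to be a gift horse precisely because $K$ is locally monotone. Applying the coupling gadget to $G,H\in\Cc$ gives $J=\g{G,\g{\top|H}|\g{G|\bot},H}\in\Cc$. Comparing $J$ with $K=\g{G|H}$, I see that $J$ has exactly one extra left option, $\g{\top|H}$, and one extra right option, $\g{G|\bot}$, while the remaining options ($G$ on the left and $H$ on the right) agree. So it suffices to prove $K\eq J$, since $\Cc$ is closed under equivalence; then $K\in\Cc$ follows.

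First I would unpack the hypothesis. Since $K=\g{G|H}$ is composite and locally monotone, its left option $G$ is good and its right option $H$ is good, which by definition means exactly $K\leq G$ and $H\leq K$. These two inequalities are the only facts I will use.

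Next I would check that the two extra options of $J$ are gift horses for $K$. For the left one, $\g{\top|H}\tri K$ holds by clause~(1) in the definition of $\tri$, because the right option $H$ of $\g{\top|H}$ satisfies $H\leq K$. For the right one, $K\tri\g{G|\bot}$ holds by clause~(2), because the left option $G$ of $\g{G|\bot}$ satisfies $K\leq G$. Both are immediate from local monotonicity. Now I apply the gift horse lemma twice, one option at a time. Since $\g{\top|H}\tri K$, the lemma gives $K\eq K_1$, where $K_1=\g{G,\g{\top|H}|H}$. Then from $K_1\eq K\leq G$ and transitivity of $\leq$ I get $K_1\leq G$, so $K_1\tri\g{G|\bot}$ by clause~(2) again; the dual gift horse lemma yields $K_1\eq\g{G,\g{\top|H}|H,\g{G|\bot}}=J$. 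Chaining, $K\eq K_1\eq J\in\Cc$, hence $K\in\Cc$.

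I do not expect a serious obstacle: the argument is essentially bookkeeping, and the one mild subtlety is that the two gift horses must be introduced sequentially, so the second must be checked against the already-modified game $K_1$ rather than against $K$ itself; this is what the transitivity step $K_1\leq G$ handles. The real content is simply the observation—evidently built into the design of the coupling gadget—that $\g{G,\g{\top|H}|\g{G|\bot},H}$ is just $\g{G|H}$ padded with a left and a right gift horse, and that local monotonicity of $K$ is exactly the condition making those padded options into gift horses.
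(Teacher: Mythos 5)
Your proof is correct and is essentially the paper's own argument: both recognize the coupling-gadget output $\g{G,\g{\top|H}|\g{G|\bot},H}$ as $K=\g{G|H}$ padded with the gift horses $\g{\top|H}$ and $\g{G|\bot}$, whose gift-horse status ($\g{\top|H}\tri K$ and $K\tri\g{G|\bot}$) follows exactly from the local monotonicity inequalities $H\leq K\leq G$. The only difference is that you introduce the two gift horses sequentially with an explicit transitivity step, whereas the paper adds both at once; this is an immaterial bookkeeping variation.
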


\begin{proof}
  By definition of local monotonicity, we have $H\leq K \leq G$.  It
  follows that $\g{\top|H}\tri K$ and $K\tri \g{G|\bot}$. Therefore
  $\g{\top|H}$ and $\g{G|\bot}$ are left and right gift horses for
  $K$, respectively. Thus $K$ is equivalent to
  $\g{G,\g{\top|H}|\g{G|\bot},H}$, which is in $\Cc$ by the coupling
  gadget. Since $\Cc$ is closed under equivalence, we have $K\in\Cc$.
\end{proof}

The following is a generalization of the coupling lemma to games with
more than two options. A game is called {\em locally semi-monotone} if
it has at least one good left option and at least one good right
option. Every monotone game is semi-monotone, and every
semi-monotone game is passable.

\begin{lemma}[$(n,m)$-ary coupling lemma]\label{lem:nary-coupling}
  If $G_1,\ldots,G_n,H_1,\ldots,H_m\in\Cc$ and
  $K=\g{G_1,\ldots,G_n|H_1,\ldots,H_m}$ is locally semi-monotone, then
  $K\in\Cc$.
\end{lemma}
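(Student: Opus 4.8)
The plan is to reduce the $(n,m)$-ary coupling lemma to the already-proved binary coupling lemma (Lemma~\ref{lem:coupling}) by collapsing the multiple left options into a single option and likewise the multiple right options, using the one-sided $n$-ary choice gadget (Lemma~\ref{lem:nary-one-sided-choice}) together with the $\upl$ and $\downr$ constructions. The key observation is that $\upl(G_1,\ldots,G_n)$ is left equivalent to the list $G_1,\ldots,G_n$, and dually $\downr(H_1,\ldots,H_m)$ is right equivalent to $H_1,\ldots,H_m$. So I expect $K$ to be equivalent to the binary game $K' = \g{\upl(G_1,\ldots,G_n)\,|\,\downr(H_1,\ldots,H_m)}$, at which point I would like to apply the binary coupling lemma to $K'$.

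First I would establish membership of the two collapsed options in $\Cc$. By Lemma~\ref{lem:nary-one-sided-choice}, $\g{G_1,\ldots,G_n|\bot}\in\Cc$, and then by the left forcing gadget $\upl(G_1,\ldots,G_n) = \g{\top|\g{G_1,\ldots,G_n|\bot}}\in\Cc$; dually, $\downr(H_1,\ldots,H_m)\in\Cc$ using the $n$-ary choice gadget's right-handed analogue and the right forcing gadget. Next I would show $K\eq K'$: since $\upl(G_1,\ldots,G_n)$ is left equivalent to $G_1,\ldots,G_n$ and $\downr(H_1,\ldots,H_m)$ is right equivalent to $H_1,\ldots,H_m$, replacing the two lists of options one side at a time by their collapsed versions preserves the value, so $K\eq K'$.

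The step I expect to be the main obstacle is verifying that $K'$ is \emph{locally monotone}, which is the precise hypothesis the binary coupling lemma demands (not merely semi-monotone). I would argue as follows: since $K$ is locally semi-monotone, it has a good left option $G_i$, i.e.\ $K\leq G_i$, and a good right option $H_j$, i.e.\ $H_j\leq K$. From $K\eq K'$ I get $K'\leq G_i$ and $H_j\leq K'$. I then need to translate these into $K'\leq \upl(G_1,\ldots,G_n)$ and $\downr(H_1,\ldots,H_m)\leq K'$, i.e.\ that the single collapsed left option is good for $K'$ and likewise on the right. This should follow because the collapsed options dominate the individual ones: $G_i\leq\upl(G_1,\ldots,G_n)$ in the relevant sense, so $K'\leq G_i$ yields $K'\leq\upl(G_1,\ldots,G_n)$, and dually on the right; making these domination inequalities precise via the definitions of $\upl$, $\downr$, and $\tri$ is where the real care is needed.

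Once $K'$ is known to be in $\Cc$ (its two options lie in $\Cc$ and it is locally monotone, so Lemma~\ref{lem:coupling} applies), closure of $\Cc$ under equivalence together with $K\eq K'$ gives $K\in\Cc$, completing the proof. If the direct domination argument for local monotonicity proves awkward, an alternative I would keep in reserve is to appeal directly to the coupling gadget with $G=\upl(G_1,\ldots,G_n)$ and $H=\downr(H_1,\ldots,H_m)$ and check the gift-horse inequalities $\g{\top|H}\tri K'$ and $K'\tri\g{G|\bot}$ by hand, mirroring the proof of Lemma~\ref{lem:coupling}.
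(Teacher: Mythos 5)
Your proposal is correct and is essentially the paper's own proof: the paper likewise uses Lemma~\ref{lem:nary-one-sided-choice} and the forcing gadgets to form $K'=\g{\upl(G_1,\ldots,G_n)|\downr(H_1,\ldots,H_m)}$, shows $K'\eq K$ by left/right equivalence, checks that $K'$ is locally monotone using a good option of $K$, and concludes with Lemma~\ref{lem:coupling} and closure under equivalence. The only cosmetic difference is in verifying local monotonicity: you go via $K'\leq G_i\leq\upl(G_1,\ldots,G_n)$ and transitivity, while the paper deduces $K\tri\g{G_1,\ldots,G_n|\bot}$ from $K\leq G_i$ and hence $K\leq\upl(G_1,\ldots,G_n)$ directly from the definition of the order; both routes are sound.
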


\begin{proof}
  Let $G=\g{G_1,\ldots,G_n|\bot}$ and
  $H=\g{\top|H_1,\ldots,H_m}$. Then $G,H\in\Cc$ by
  Lemma~\ref{lem:nary-one-sided-choice} and its dual. By the forcing
  gadgets, we also have $\g{\top|G}\in\Cc$ and $\g{H|\bot}\in\Cc$.
  Let $K'=\g{\g{\top|G}|\g{H|\bot}}$. Note that the left option of
  $K'$ is $\g{\top|G}=\upl(G_1,\ldots,G_n)$, which is left equivalent
  to $G_1,\ldots,G_n$, and similarly for the right option. Hence
  $K'\eq K$.  It is easy to see that $K'$ is locally monotone; namely,
  since $K$ is semi-monotone, it has some good left option $G_i$. Then
  we have $K\leq G_i$, and therefore $K\tri G$, which implies
  $K\leq\g{\top|G}$, hence $K'\leq\g{\top|G}$. The proof of
  $\g{H|\bot}\leq K'$ is dual. Therefore, the hypotheses of
  Lemma~\ref{lem:coupling} are satisfied. Hence $K'\in\Cc$, and
  therefore $K\in\Cc$.
\end{proof}

\begin{proof}[Proof of Proposition~\ref{prop:gadgets}]
  By Lemma~\ref{lem:passable-closure}, the class of finite passable
  games over $A$ satisfies the closure properties of
  Proposition~\ref{prop:gadgets}. What we need to show is that it is
  the smallest such class up to equivalence. So let $\Cc$ be any class
  of games containing all atomic games and closed under the four
  primitive gadgets and under equivalence of games. We must show that
  all finite passable games belong to $\Cc$. Since by the fundamental
  theorem of passable games, every passable game is equivalent to a
  monotone game, it is sufficient to show that all finite monotone
  games belong to $\Cc$. We will show this by induction on games. For
  the base case, note that all atomic games belong to $\Cc$ by
  assumption. For the induction step, consider some monotone game
  $K=\g{G_1,\ldots,G_n|H_1,\ldots,H_m}$. By the induction hypothesis,
  $G_1,\ldots,G_n,H_1,\ldots,H_m\in\Cc$. Then $K\in\Cc$ by
  Lemma~\ref{lem:nary-coupling}.
\end{proof}

\begin{remark}
  The coupling gadget $\g{G,\g{\top|H}|\g{G|\bot},H}$ is self-dual.
  In Proposition~\ref{prop:gadgets}, we could have equivalently
  replaced it by the following {\em left-biased coupling gadget}:
  $\g{\g{\top|G'},\g{\top|H}|G',H}$.  Indeed, in the presence of
  forcing, the two are easily seen to be equivalent via the
  substitutions $G'=\g{G|\bot}$ and $G=\g{\top|G'}$. Thus, while it is
  an essential feature of the coupling gadget that the player whose
  turn it is in $G$ or $H$ does not depend on which player makes the
  choice of whether to play $G$ or $H$, it is inessential who that
  player is. Of course there is also a dual {\em right-biased coupling
    gadget}.
\end{remark}

We end this subsection by pointing out that
Proposition~\ref{prop:gadgets} gives rise to an induction principle
for passable games. To show that all passable games have some
property, it suffices to show that the property is invariant under
equivalence, holds for atomic games, and is closed under the four
primitive gadgets. This is usually more convenient than working
directly by induction on the definition of passable games, because the
latter requires each induction step to use the property of local
passability, which is not itself an inductive property.

\subsection{Making gadgets from games}\label{ssec:new-closure}

In this section, we show that when a class of games is closed under
the sum and map operations, then the existence of four specific
games is sufficient to guarantee closure under the primitive gadgets
of Proposition~\ref{prop:gadgets}.

Let $P_3=\s{\top,a,\bot}$ be the 3-element linearly ordered poset, and
let $P_4=\s{\top,a,b,\bot}$ be the 4-element poset from
Example~\ref{exa:notation}.  Let $A$ be any poset with top and bottom
elements. We define functions $f:P_3\times A \to A$ and $g:P_4\times
A\times A\to A$ by
\[
f(x,y) =
\begin{choices}
  \top & \mbox{if $x=\top$,} \\
  y & \mbox{if $x=a$,} \\
  \bot & \mbox{if $x=\bot$}
\end{choices}
\qquad\mbox{and}\qquad
g(x,y,z) =
\begin{choices}
  \top & \mbox{if $x=\top$,} \\
  y & \mbox{if $x=a$,} \\
  z & \mbox{if $x=b$,} \\
  \bot & \mbox{if $x=\bot$.}
\end{choices}
\]
Then $f$ and $g$ are monotone functions. Therefore, if $X$
is a game over $P_3$ and $G$ is a game over $A$, then the sum $f(X+G)$
is a well-defined game over $A$. Similarly, if $X$ is a game over
$P_4$ and $G,H$ are games over $A$, the sum $g(X+G+H)$ is a
well-defined game over $A$. For brevity, we will henceforth write
these sums simply as $X+G$ or $X+G+H$, i.e., the functions $f$ and $g$
will be understood from the context. 

\begin{lemma}\label{lem:gadgets}
  Let $G,H$ be games over $A$. Then the following hold:
  \begin{enumerate}\alphalabels
  \item $\g{\top|a}+G  ~\eq~ \g{\top|G}$.
    \label{lem:gadgets-a1}
  \item $\g{a|\bot}+G  ~\eq~ \g{G|\bot}$.
    \label{lem:gadgets-a2}
  \item $\g{\g{\top|a},\g{\top|b}|\g{a|\bot},\g{b|\bot}}+G+H ~\eq~ \g{\g{\top|G},\g{\top|H}|\g{G|\bot},\g{H|\bot}}$.
    \label{lem:gadgets-b}
  \item $\g{a,\g{\top|b}|\g{a|\bot},b}+G+H ~\eq~ \g{G,\g{\top|H}|\g{G|\bot},H}$.
    \label{lem:gadgets-c}
  \end{enumerate}
\end{lemma}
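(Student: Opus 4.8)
The plan is to prove each identity by induction on the game(s) involved: expand the left-hand side using the definitions of the sum and map operations, recognize the result as the target gadget equipped with a number of harmless extra options, and then delete those extras using the gift horse lemma. Three elementary computations do most of the work, so I would record them first. First, if every atomic position of a game equals $\top$ then the game is equivalent to $\top$, and dually for $\bot$; applied through the map operation this gives $f([\top]+G)\eq\top$, $f([\bot]+G)\eq\bot$ and likewise $g([\top]+G+H)\eq\top$, $g([\bot]+G+H)\eq\bot$. Second, since $[a]$ is atomic it contributes no branching, so $f([a]+G)=G$ literally. Third, a one-line induction shows $\bot\leq K\leq\top$ for every game $K$, which makes $\top$ dominate every competing left option and $\bot$ every competing right option.

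For part~\ref{lem:gadgets-a1} I would induct on $G$. When $G$ is atomic the sum unwinds directly to $\g{\top|G}$. When $G$ is composite, expanding $f(\g{\top|a}+G)$ and using the first two computations shows that its left options are $\top$ together with the games $f(\g{\top|a}+G^L)$, and its right options are $G$ together with the games $f(\g{\top|a}+G^R)$. By the induction hypothesis these extras equal $\g{\top|G^L}$ and $\g{\top|G^R}$. Each $\g{\top|G^L}\leq\top$ is a left gift horse for $\g{\top|G}$ (witnessed by the left option $\top$), and each $\g{\top|G^R}$ is a right gift horse because a short check from the definition of $\leq$ gives $G\leq\g{\top|G^R}$ (using $G\tri G^R$, which holds by reflexivity). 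Discarding all the extras yields $f(\g{\top|a}+G)\eq\g{\top|G}$. Part~\ref{lem:gadgets-a2} is exactly dual.

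For parts~\ref{lem:gadgets-b} and~\ref{lem:gadgets-c} I would run the same strategy on the pair $(G,H)$ simultaneously, noting first that the $P_4$-game being added is itself the choice gadget (respectively coupling gadget) applied to the atoms $a,b$, so the statement is really that applying $g$ — which substitutes $G$ for $a$ and $H$ for $b$ — commutes with the gadget. Expanding $g(Y+G+H)$, the four principal options come from the options of $Y$ and should reduce to the four options of the target gadget, e.g. $g(\g{\top|a}+G+H)\eq\g{\top|G}$ and $g(\g{a|\bot}+G+H)\eq\g{G|\bot}$ for the choice gadget, and $g([a]+G+H)\eq G$, $g([b]+G+H)\eq H$ for the coupling gadget. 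Every remaining option — those arising from playing inside $G$ or $H$ before the $P_4$-component is spent — is then a gift horse for the target gadget and is removed exactly as in part~\ref{lem:gadgets-a1}.

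The hard part will be establishing the dummy-move reductions such as $g([a]+G+H)\eq G$, and this is the real obstacle. Once a player commits the $P_4$-component to $a$, the $H$-component can still be played but is ignored by $g$, so expanding produces options of the shape $\g{\ldots,G,\ldots\mid\ldots,G,\ldots}$ in which $G$ recurs as both a left and a right option; collapsing these back to $G$ requires that $G$ be a gift horse for itself, i.e. $G\tri G$, and symmetrically $H\tri H$. This is precisely local passability, which is available for the passable games $G,H$ to which the lemma is applied, and indeed a small example with $G=H=\g{\bot|\top}$ shows the reduction genuinely fails without it. I would therefore isolate the dummy-move reduction as the load-bearing sublemma and prove it by a subsidiary induction on $(G,H)$, combining $G\tri G$ and $H\tri H$ with the domination facts $\bot\leq K\leq\top$ from the first paragraph; everything else is the routine gift-horse bookkeeping already rehearsed in part~\ref{lem:gadgets-a1}.
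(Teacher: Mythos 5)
Your parts \eqref{lem:gadgets-a1} and \eqref{lem:gadgets-a2} are the paper's proof: the same induction on $G$, the same identification of the options of $\g{\top|a}+G$ as $\top$, $G$, and the recursively simplified sums, and the same removal of the extra options by the gift horse lemma (your justification of the right gift horse via $G\leq\g{\top|G^R}$ is a harmless variant of the paper's). For \eqref{lem:gadgets-b} and \eqref{lem:gadgets-c} you also follow the paper's route, but you have put your finger on a step that the paper waves through, and your instinct is correct in a strong sense. The paper's proof of \eqref{lem:gadgets-c} asserts that $a+G+H$ ``is easily seen to be equivalent to $G$'', and its proof of \eqref{lem:gadgets-b} asserts $\g{\top|a}+G+H\eq\g{\top|G}$ ``by the same argument as in \eqref{lem:gadgets-a1}'', with no hypothesis on $G,H$; these are exactly your dummy-move reductions, and they are false in this generality. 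Your counterexample checks out: for $G=H=\g{\bot|\top}$ one gets $g([a]+G+H)\eq\g{\bot,G|\top,G}$, and $\g{\bot,G|\top,G}\leq G$ would force its left option $G$ to satisfy $G\tri G$, i.e.\ local passability, which fails for $\g{\bot|\top}$. Moreover the damage is not confined to the paper's proof: for these same $G,H$, part \eqref{lem:gadgets-b} is false as stated, because the left option $\g{\top|a}+G+H$ of the left-hand side fails to be $\tri$ the right-hand side (every branch of the check bottoms out at $[\top]\leq G$ or $G\tri G$, both false for $\g{\bot|\top}$), so the left-hand side is not $\leq$ the right-hand side. Thus the passability hypothesis you introduce is not a limitation of your method but a necessary repair; only \eqref{lem:gadgets-a1} and \eqref{lem:gadgets-a2} hold unconditionally, as your treatment of them reflects.

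Two remarks on how your corrected statement fits the surrounding architecture. First, the restriction to passable $G,H$ is exactly what the paper needs downstream: in the proof of Theorem~\ref{thm:main} the identities are only ever invoked for games of the form $\sem{S}$, which are monotone and hence passable, and the conclusion is transported to an arbitrary equivalent game by replacing options with equivalent ones; Corollary~\ref{cor:closure}, whose class $\Cc$ may contain non-passable members, must be read with the same repair. Second, in your load-bearing sublemma, note that each reduction $g([a]+G+H)\eq G$ needs passability (local passability at every position) only of the \emph{retained} component $G$, with $H$ arbitrary; your planned induction on $(G,H)$, using the gift horse lemma with $G'\tri G'$ at each position $G'$ of $G$, proves exactly this. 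With that sublemma in place, your remaining bookkeeping --- principal options matching the options of the target gadget, all other options being gift horses --- is precisely the paper's argument, and it is correct.
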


\begin{proof}
  \begin{enumerate}\alphalabels
  \item By induction on $G$. Let $X=\g{\top|a}$ and $K=\g{\top|G}$. We
    want to show $X+G\eq K$. The left options of $X+G$ are:
    \begin{itemize}
    \item $\top+G$. This is easily seen to be equivalent to $\top$.
    \item $X+G^L$. This is equivalent to $\g{\top|G^L}$ by the
      induction hypothesis.
    \end{itemize}
    The right options of $\g{\top|a}+G$ are:
    \begin{itemize}
    \item $a+G$. This is easily seen to be equivalent to $G$.
    \item $X+G^R$. This is equivalent to $\g{\top|G^R}$ by the
      induction hypothesis.
    \end{itemize}
    Therefore, $X+G\eq \g{\top,\g{\top|G^L}|G,\g{\top|G^R}}$.  Note
    that $\g{\top|G^L}\tri \g{\top|G}$ and $\g{\top|G}\tri
    \g{\top|G^R}$. Therefore, $\g{\top|G^L}$ is a left gift horse and
    $\g{\top|G^R}$ is a right gift horse for $K$. By the gift horse
    lemma, it follows that $X+G\eq K$ as claimed.
  \item This is the dual of {\eqref{lem:gadgets-a1}}.
  \item By induction on $G$ and $H$. Let
    $X=\g{\g{\top|a},\g{\top|b}|\g{a|\bot},\g{b|\bot}}$ and
    $K=\g{\g{\top|G},\g{\top|H}|\g{G|\bot},\g{H|\bot}}$. We want to
    show $X+G+H\eq K$. The left options of $X+G+H$ are:
    \begin{itemize}
    \item $\g{\top|a}+G+H$. This is equivalent to $\g{\top|G}$ by the
      same argument as in {\eqref{lem:gadgets-a1}}.
    \item $\g{\top|b}+G+H$. This is equivalent to $\g{\top|H}$ by the
      same argument as in {\eqref{lem:gadgets-a1}}.
    \item $X+G^L+H$. This is equivalent to
      $\g{\g{\top|G^L},\g{\top|H}|\g{G^L|\bot},\g{H|\bot}}$ by the
      induction hypothesis.
    \item $X+G+H^L$. This is equivalent to
      $\g{\g{\top|G},\g{\top|H^L}|\g{G|\bot},\g{H^L|\bot}}$ by the
      induction hypothesis.
    \end{itemize}
    The right options are dual. We
    have $G^L\tri G$, hence $G^L\leq\g{\top|G}$,
    hence $G^L\tri K$,
    hence $\g{G^L|\bot}\leq K$,
    hence $\g{\g{\top|G^L},\g{\top|H}|\g{G^L|\bot},\g{H|\bot}}\tri
    K$. Therefore $X+G^L+H$ is a left gift horse for $K$.  Similarly,
    $X+G+H^L$ is also a left gift horse for $K$. The situation for the
    right options is dual. By the gift horse lemma, it follows that
    $X+G+H \eq K$ as claimed.
    \item By induction on $G$ and $H$. Let
      $X=\g{a,\g{\top|b}|\g{a|\bot},b}$ and
      $K=\g{G,\g{\top|H}|\g{G|\bot},H}$. We want to show $X+G+H\eq
      K$. The left options of $X+G+H$ are:
    \begin{itemize}
    \item $a+G+H$. This is easily seen to be equivalent to $G$.
    \item $\g{\top|b}+G+H$. This is equivalent to $\g{\top|H}$ by the
      same argument as in {\eqref{lem:gadgets-a1}}.
    \item $X+G^L+H$. This is equivalent to $\g{G^L,\g{\top|H}|\g{G^L|\bot},H}$ 
        by the induction hypothesis.
    \item $X+G+H^L$. This is equivalent to $\g{G,\g{\top|H^L}|\g{G|\bot},H^L}$ 
        by the induction hypothesis.
    \end{itemize}
    The right options are dual.  Since
    $\g{G^L,\g{\top|H}|\g{G^L|\bot},H}\leq\g{\top|H}$, we have
    $\g{G^L,\g{\top|H}|\g{G^L|\bot},H}\tri K$. Therefore, $X+G^L+H$ is
    a left gift horse for $K$.  Similarly, since $G$ is a left option
    of $K$, we have $\g{G|\bot}\leq K$, hence
    $\g{G,\g{\top|H^L}|\g{G|\bot},H^L}\tri K$. Therefore $X+G+H^L$ is
    also a left gift horse for $K$.  The situation for the right
    options is dual. By the gift horse lemma, it follows that
    $X+G+H\eq K$ as claimed.\qedhere
  \end{enumerate}
\end{proof}

\begin{remark}
  Although it is tempting to believe that the analogue of
  Lemma~\ref{lem:gadgets} holds for all games over $P_3$ and/or $P_4$,
  this is not the case. For example, it is not in general the case
  that $\g{\g{\top|a}|a} + G \eq \g{\g{\top|G}|G}$. In fact, we have
  $\g{\g{\top|a}|a} + G \eq G$. We say that a game $X$ over $P_3$ is a
  {\em gadget game} if $X+G$ is equivalent to the game obtained from
  $X$ by replacing all occurrences of the atom $a$ by $G$, and
  similarly for games over $P_4$. An analogous definition can also be
  given for $P_n$ with $n>4$. In particular, Lemma~\ref{lem:gadgets}
  states that the games $\g{\top|a}$,~ $\g{a|\bot}$,~
  $\g{\g{\top|a},\g{\top|b}|\g{a|\bot},\g{b|\bot}}$, and
  $\g{a,\g{\top|b}|\g{a|\bot},b}$ are gadget games. There are many
  other gadget games too, but for the results of this paper, we only
  need the above four.
\end{remark}

\begin{corollary}\label{cor:closure}
  Let $\Cc$ be a class of games over posets with top and bottom
  elements. If $\Cc$ is closed under equivalence, sums, and the map
  operation of monotone functions (or at least of the functions $f$
  and $g$ above), and if $\Cc$ contains the four gadget games
  $\g{\top|a}$,~ $\g{a|\bot}$,~
  $\g{\g{\top|a},\g{\top|b}|\g{a|\bot},\g{b|\bot}}$, and
  $\g{a,\g{\top|b}|\g{a|\bot},b}$, then $\Cc$ is closed under the
  primitive gadgets of Proposition~\ref{prop:gadgets}.
\end{corollary}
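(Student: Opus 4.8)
The plan is to derive the corollary directly from Lemma~\ref{lem:gadgets}, since that lemma already exhibits each of the four primitive gadgets as the map-image of a sum of a fixed gadget game with the input game(s). Concretely, I would treat the four primitive gadgets one at a time, and in each case produce the target game inside $\Cc$ by the same three-step recipe: first form the sum of the appropriate gadget game with the input game(s), using closure under sums; then apply the relevant map ($f$ for the $P_3$ gadgets, $g$ for the $P_4$ gadgets), using closure under the map operation; and finally replace the resulting game by the target gadget via Lemma~\ref{lem:gadgets} and closure under equivalence.

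For the left forcing gadget I would start from $G\in\Cc$ and the gadget game $\g{\top|a}$, which lies in $\Cc$ by hypothesis. Closure under sums gives the game $\g{\top|a}+G$ over $P_3\times A$ in $\Cc$, and closure under the map $f$ gives $f(\g{\top|a}+G)\in\Cc$. By Lemma~\ref{lem:gadgets}\eqref{lem:gadgets-a1} (recalling that ``$\g{\top|a}+G$'' there abbreviates $f(\g{\top|a}+G)$) this game is equivalent to $\g{\top|G}$, so $\g{\top|G}\in\Cc$ by closure under equivalence. The right forcing gadget is handled identically, using the gadget game $\g{a|\bot}$ and Lemma~\ref{lem:gadgets}\eqref{lem:gadgets-a2}.

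The choice and coupling gadgets follow the same pattern, now with two inputs $G,H\in\Cc$ and the map $g:P_4\times A\times A\to A$. For the choice gadget I would take the gadget game $\g{\g{\top|a},\g{\top|b}|\g{a|\bot},\g{b|\bot}}$, form the map $g$ of its sum with $G$ and $H$ (closure under sums and map), and then apply Lemma~\ref{lem:gadgets}\eqref{lem:gadgets-b} together with closure under equivalence to conclude $\g{\g{\top|G},\g{\top|H}|\g{G|\bot},\g{H|\bot}}\in\Cc$. The coupling gadget is the same, using the gadget game $\g{a,\g{\top|b}|\g{a|\bot},b}$ and Lemma~\ref{lem:gadgets}\eqref{lem:gadgets-c} to obtain $\g{G,\g{\top|H}|\g{G|\bot},H}\in\Cc$.

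I do not expect any genuine obstacle here: the corollary is essentially a bookkeeping consequence of Lemma~\ref{lem:gadgets}. The only point that needs care is to keep the two closure steps distinct — the sum lives over the product poset $P_3\times A$ (resp.\ $P_4\times A\times A$), and only after applying $f$ (resp.\ $g$) do we obtain a game over $A$ matching the notation of the lemma. This is also exactly what explains why it is enough to assume closure under the map operation for just the two functions $f$ and $g$, as indicated by the parenthetical in the statement; full closure under all monotone maps is never invoked.
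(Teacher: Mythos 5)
Your proposal is correct and is exactly the argument the paper intends: the corollary is stated without a separate proof precisely because it follows from Lemma~\ref{lem:gadgets} by the sum--map--equivalence recipe you describe. Your remark about keeping track of the product posets $P_3\times A$ and $P_4\times A\times A$ before applying $f$ or $g$ correctly unpacks the paper's abbreviation convention, and your observation that only $f$ and $g$ are needed matches the parenthetical in the statement.
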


\subsection{Realizability by monotone set coloring games}
\label{ssec:new-proof}

We want to show that all finite passable games are realizable as
monotone set coloring games. Due to Proposition~\ref{prop:gadgets} and
Corollary~\ref{cor:closure}, it suffices to show that the class of
realizable games contains all atomic games, is closed under the sum
and map operations, and contains the four primitive gadget games.
We prove each of these properties in turn.

\begin{lemma}\label{lem:atom-realizable}
  Let $A$ be a poset and $a\in A$. The atomic game $[a]$ is realizable.
\end{lemma}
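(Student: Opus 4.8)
The plan is to realize $[a]$ by the most economical monotone set coloring game, namely one with an empty board. I would define $S$ by taking the carrier $|S|=\emptyset$ and letting $\payoff{S}:\Bool^{\emptyset}\to A$ be the unique function sending the empty coloring to $a$. Since $\Bool^{\emptyset}$ is a one-element poset, $\payoff{S}$ is automatically monotone, so $S$ is a legitimate monotone set coloring game over $A$. The key observation is then that the all-empty position $\emptypos_S$ is itself atomic: a position is atomic exactly when it has no empty cells, and $\emptypos_S$ has no cells at all. Hence by the definition of $\sem{-}$ we get $\sem{S}=\sem{\emptypos_S}=[\payoff{S}(\emptypos_S)]=[a]$, so $[a]\eq\sem{S}$ and $[a]$ is realizable.

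The only point requiring any care is confirming that the empty carrier is admissible (it is, since the empty set is finite) and that its unique position counts as atomic; once this is granted, no computation is needed, because $\sem{S}$ is literally the atomic game $[a]$ rather than merely equivalent to it.

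Should one prefer to avoid the degenerate empty board, I would instead take a single cell $c$ with constant payoff $\payoff{S}(p)=a$. In that case the all-empty position is non-atomic, its two options both color $c$ and reach atomic positions with payoff $a$, and so $\sem{S}=\g{[a]|[a]}$. A short unfolding of the mutual recursion defining $\leq$ and $\tri$ then shows $\g{[a]|[a]}\eq[a]$, using only $a\leq a$ at the atomic base case, and this realizes $[a]$ as well. I expect no genuine obstacle here: this is a base case whose entire content lies in matching up the definitions of monotone set coloring game, atomic position, and the semantics $\sem{-}$.
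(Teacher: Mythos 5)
Your proposal is correct and matches the paper's proof exactly: the paper also realizes $[a]$ via a monotone set coloring game with empty carrier and constant payoff function, so that $\sem{S}$ is literally $[a]$. Your extra verification (the empty position is atomic) and the optional one-cell variant are fine but not needed beyond what the paper treats as trivial.
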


\begin{proof}
  The game $[a]$ is trivially realizable as a monotone set coloring
  game with empty carrier and constant payoff function.
\end{proof}

\begin{lemma}\label{lem:sum-realizable}
  Let $A$ and $B$ be posets, and let $G$ and $H$ be games over $A$ and
  $B$, respectively. If $G$ and $H$ are realizable, then so is $G+H$.  
\end{lemma}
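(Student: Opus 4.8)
The plan is to realize the sum $G+H$ by the ``disjoint union'' of monotone set coloring games realizing $G$ and $H$. Since $G$ and $H$ are realizable, I first fix monotone set coloring games $S:A$ and $T:B$ with $G\eq\sem{S}$ and $H\eq\sem{T}$. I then define a monotone set coloring game $S\sqcup T$ over the product poset $A\times B$ by taking the carrier to be the disjoint union $|S|\sqcup|T|$ and the payoff function to be
\[
\payoff{S\sqcup T}(p)=\bigl(\payoff{S}(p\restr{|S|}),\,\payoff{T}(p\restr{|T|})\bigr),
\]
where $p\restr{|S|}$ and $p\restr{|T|}$ denote the restrictions of a coloring $p\in\Bool^{|S|\sqcup|T|}$ to the two parts of the carrier. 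This is monotone because $\payoff{S}$ and $\payoff{T}$ are, and the order on $A\times B$ is computed componentwise; so $S\sqcup T$ is a legitimate monotone set coloring game.

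The heart of the argument is the identity $\sem{S\sqcup T}=\sem{S}+\sem{T}$, which I expect to hold on the nose (not merely up to equivalence). I would prove it by induction on the number of uncolored cells, after observing that a position of $S\sqcup T$ is exactly a pair $(p,q)$ consisting of a position $p$ of $S$ and a position $q$ of $T$, with $(p,q)$ atomic if and only if both $p$ and $q$ are. In the base case both parts are atomic and both sides equal $[(\payoff{S}(p),\payoff{T}(q))]$, matching the atomic clause of the sum. In the inductive case at least one part is non-atomic, so $(p,q)$ is non-atomic; its left options are obtained by coloring some empty cell with the left player's color $\top$, giving exactly the positions $(p^L,q)$ and $(p,q^L)$, and dually for right options. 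Applying the induction hypothesis and comparing with the composite clause of the sum, both sides have the same left and right option sets, hence are literally equal. Specializing to the empty position gives $\sem{S\sqcup T}=\sem{\emptypos_{S\sqcup T}}=\sem{S}+\sem{T}$, so $\sem{S}+\sem{T}$ is realizable.

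It remains to transfer this from $\sem{S}+\sem{T}$ to $G+H$. Since realizability is preserved under equivalence, it suffices to show $G+H\eq\sem{S}+\sem{T}$. Here I would invoke the fact (recalled just after the definition of sum) that sums respect equivalence among passable games: by Remark~\ref{rem:monotone-set-is-monotone} the games $\sem{S}$ and $\sem{T}$ are passable, and they are equivalent to $G$ and $H$, so $G+H\eq\sem{S}+\sem{T}$, which is realized by $S\sqcup T$. This completes the argument.

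I expect the main obstacle to be this last step rather than the combinatorial identity. The equivalence-respecting property for sums is stated for passable games, so some care is needed to be sure it applies: one wants to pass from $G,H$ to their passable (indeed monotone) realizations $\sem{S},\sem{T}$, and the cleanest way is to work throughout with these passable representatives, using that the entire development is only ever up to equivalence. By contrast, the induction establishing $\sem{S\sqcup T}=\sem{S}+\sem{T}$ is routine once the position/option bookkeeping is set up; the only thing to watch is that the atomic-versus-composite case split of the sum operation lines up precisely with whether the combined position $(p,q)$ is atomic.
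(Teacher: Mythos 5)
Your proposal is correct and takes essentially the same approach as the paper: the paper likewise realizes $G+H$ by the disjoint-union game $S+T$ with payoff $\payoff{S+T}(p)=(\payoff{S}(p\restr{S}),\payoff{T}(p\restr{T}))$ and disposes of the key identity $\sem{S+T}=\sem{S}+\sem{T}$ with the words ``an easy induction.'' The transfer issue you flag at the end---that concluding $G+H\eq\sem{S}+\sem{T}$ invokes equivalence-invariance of sums, which is stated only for passable summands, while $G$ and $H$ themselves need not be passable---is a genuine subtlety, but the paper's own proof passes over it in silence, so on this point you are if anything more careful than the original.
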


\begin{proof}
  Let $S:A$ and $T:B$ be monotone set coloring games realizing $G$ and
  $H$, respectively. We define a new game $S+T$ as follows. Its
  carrier is the disjoint union of $|S|$ and $|T|$. The payoff
  function is given by
  \[
  \payoff{S+T}(p) = (\payoff{S}(p\restr{S}), \payoff{T}(p\restr{T})).
  \]
  An easy induction shows that $\sem{S+T} = \sem{S} + \sem{T}$.
\end{proof}

\begin{lemma}\label{lem:map-realizable}
  Let $A$ and $B$ be posets, let $G$ be a game over $A$, and let
  $f:A\to B$ be a monotone function. If $G$ is realizable, then so is
  $f(G)$.
\end{lemma}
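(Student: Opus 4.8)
The plan is to realize $f(G)$ by post-composing $f$ onto the payoff function of a game that realizes $G$. Let $S:A$ be a monotone set coloring game with $G\eq\sem{S}$, which exists by realizability. Define $T:B$ to have the same carrier, $|T|=|S|$, and payoff function $\payoff{T}=f\circ\payoff{S}$. Since $\payoff{S}$ and $f$ are both monotone, so is their composite, hence $T$ is a legitimate monotone set coloring game over $B$. Because $S$ and $T$ share a carrier, they have identical positions, and a routine induction on a position $p$ shows $\sem{p}_T=f(\sem{p}_S)$ (writing $\sem{\cdot}_S$ and $\sem{\cdot}_T$ for the two associated semantics): for atomic $p$ both sides equal $[f(\payoff{S}(p))]$, and for non-atomic $p$ the options of $p$ agree in $S$ and $T$, so $\sem{p}_T=\g{\sem{p^L}_T|\sem{p^R}_T}=\g{f(\sem{p^L}_S)|f(\sem{p^R}_S)}=f(\sem{p}_S)$ by the induction hypothesis and the definition of the map operation. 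In particular, $\sem{T}=f(\sem{S})$.

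It remains to connect $f(G)$ with $f(\sem{S})$. Since we only know $G\eq\sem{S}$, the crux is to show that the map operation respects equivalence, i.e., $G\eq H$ implies $f(G)\eq f(H)$. I would prove the sharper statement that $f$ is monotone for $\leq$ and $\tri$ simultaneously, by mutual induction following the recursive structure of the two definitions: if $G\leq H$ then $f(G)\leq f(H)$, and if $G\tri H$ then $f(G)\tri f(H)$. The induction is driven by two structural facts about the map operation: it sends atomic games to atomic games and composite games to composite games, and it carries the options of $G$ exactly onto those of $f(G)$ (the left options of $f(G)$ are precisely the games $f(G^L)$, and similarly on the right). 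For $\tri$, each of the three defining clauses transfers directly: the two option clauses by the induction hypothesis applied to a witnessing option, and the atomic clause by monotonicity of $f$, since $a\leq b$ gives $f(a)\leq f(b)$. For $\leq$, clauses (1) and (2) transfer by applying the induction hypothesis to all options, while clause (3) uses that $f$ preserves the atomic/composite distinction: if $f(G)$ or $f(H)$ is atomic, then $G$ or $H$ is atomic, so $G\tri H$ holds, and the induction hypothesis yields $f(G)\tri f(H)$.

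I expect the bookkeeping around this third (atomic) clause to be the only delicate point, since it is precisely where the nonstandard definition of $\leq$ and $\tri$ on atomic games comes into play; the remaining steps are routine. Granting the claim, we conclude: from $G\eq\sem{S}$ we obtain $f(G)\eq f(\sem{S})=\sem{T}$, so $T$ realizes $f(G)$, and hence $f(G)$ is realizable.
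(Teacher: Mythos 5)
Your proof is correct and takes essentially the same approach as the paper: realize $f(G)$ by the game with the same carrier as $S$ and payoff function $f\circ\payoff{S}$, then show $\sem{f(S)}=f(\sem{S})$ by induction on positions. The only difference is that you explicitly identify and fill the step the paper leaves implicit---that the map operation respects equivalence, proved by mutual induction on $\leq$ and $\tri$ using the facts that $f$ preserves the atomic/composite distinction and carries options to options---and your handling of it, including the delicate atomic clause, is sound.
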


\begin{proof}
  Let $S:A$ be a monotone set coloring game realizing $G$. We define a
  new game $f(S)$ with the same carrier as $S$, and payoff function
  $\payoff{f(S)}(p) = f(\payoff{S}(p))$. An easy induction shows that
  $\sem{f(S)}=f(\sem{S})$.
\end{proof}

\begin{lemma}\label{lem:realizable-values}
  The following values are realizable as monotone set
  coloring games (over $P_3$ or $P_4$, as appropriate):
  \begin{enumerate}\alphalabels
  \item $\g{\top|a}$.
    \label{lem:realizable-values-a1}
  \item $\g{a|\bot}$.
    \label{lem:realizable-values-a2}
  \item $\g{\g{\top|a},\g{\top|b}|\g{a|\bot},\g{b|\bot}}$.
    \label{lem:realizable-values-b}
  \item $\g{a,\g{\top|b}|\g{a|\bot},b}$.
    \label{lem:realizable-values-c}
  \end{enumerate}
\end{lemma}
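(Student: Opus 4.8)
The plan is to realize each of the four values by an explicit monotone set coloring game $S$ and to read $\sem{S}$ directly off the recursive definition of $\sem{-}$; in every case this reduces to checking that the chosen payoff function is monotone and then unfolding the game one or two levels. All the games involved are small.

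For \eqref{lem:realizable-values-a1} I would take the single-cell game $S$ over $P_3$ with $\payoff{S}(\top)=\top$ and $\payoff{S}(\bot)=a$; monotonicity is just $a\leq\top$. The empty position has one left option, coloring the cell $\top$, and one right option, coloring it $\bot$, both atomic, so $\sem{S}=\g{\top|a}$ on the nose. Value \eqref{lem:realizable-values-a2} is the dual: the single-cell game with $\payoff{S}(\top)=a$ and $\payoff{S}(\bot)=\bot$ gives $\sem{S}=\g{a|\bot}$.

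For \eqref{lem:realizable-values-b} I would use the two-cell game $S$ over $P_4$ with carrier $\s{1,2}$ and $\payoff{S}(\top,\top)=\top$, $\payoff{S}(\top,\bot)=a$, $\payoff{S}(\bot,\top)=b$, $\payoff{S}(\bot,\bot)=\bot$; in the notation of Example~\ref{exa:notation} this is $a:\s{\cib\ciw}$, $b:\s{\ciw\cib}$. The only point to check for monotonicity is that the sole incomparable pair of colorings, $(\top,\bot)$ and $(\bot,\top)$, is sent to the incomparable atoms $a$ and $b$, while the comparabilities $\bot\leq a,b\leq\top$ handle everything else. Unfolding one level: coloring cell $1$ black gives $\g{\payoff{S}(\top,\top)|\payoff{S}(\top,\bot)}=\g{\top|a}$, coloring cell $2$ black gives $\g{\top|b}$, and the two white moves give $\g{a|\bot}$ and $\g{b|\bot}$, so $\sem{S}=\g{\g{\top|a},\g{\top|b}|\g{a|\bot},\g{b|\bot}}$, which is exactly \eqref{lem:realizable-values-b}. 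For \eqref{lem:realizable-values-c}, rather than build a new game I would simply invoke Example~\ref{exa:notation}, whose five-cell game over $P_4$ is recorded there to have combinatorial value precisely $\g{a,\g{\top|b}|\g{a|\bot},b}$.

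I expect no serious obstacle: each part is a monotonicity check together with a one- or two-level unfolding of $\sem{-}$, all routine. The most laborious verification is the one underlying \eqref{lem:realizable-values-c}, which is exactly why I would reuse the already-computed Example~\ref{exa:notation} rather than recompute a five-cell game by hand. It is also worth recording why \eqref{lem:realizable-values-c} genuinely needs a larger board than the others. In a two-cell game both left options of the initial position share the same Left entry, namely the payoff $\payoff{S}(\top,\top)$ of the all-black position; but realizing a left option equal to the atom $a$ forces $\payoff{S}(\top,\top)=a$, whereas realizing a left option equal to $\g{\top|b}$ forces $\payoff{S}(\top,\top)=\top$. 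These are incompatible, so no two-cell game can display both, and extra cells are genuinely required to separate the corners—precisely the role played by the Example.
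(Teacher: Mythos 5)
Your proposal is correct and takes essentially the same approach as the paper: the paper's proof exhibits the same explicit games (your realizations of \eqref{lem:realizable-values-a1} and \eqref{lem:realizable-values-a2} verbatim, your realization of \eqref{lem:realizable-values-b} with the roles of $a$ and $b$ swapped, which is immaterial by symmetry) and likewise handles \eqref{lem:realizable-values-c} by citing the already-computed game of Example~\ref{exa:notation}. One caveat about your closing aside, which is not needed for the lemma: since realizability is only up to equivalence, and equivalent games need not have matching option sets, ruling out two-cell games whose initial left options are equivalent to $[a]$ and to $\g{\top|b}$ does not by itself prove that no two-cell game realizes the value $\g{a,\g{\top|b}|\g{a|\bot},b}$.
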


\begin{proof}
  One can verify by direct calculation that the following games
  realize the claimed values. Note that
  {\eqref{lem:realizable-values-a1}} and
  {\eqref{lem:realizable-values-a2}} are duals, and we already
  encountered the game {\eqref{lem:realizable-values-c}} in
  Example~\ref{exa:notation}.
  \begin{enumerate}\alphalabels
  \item $a: \s{\ciw}$, $\top: \s{\cib}$.
  \item $a: \s{\cib}$, $\top: \emptyset$.
  \item $a: \s{\ciw\cib}$, $b: \s{\cib\ciw}$.
  \item $a:
    \s{\ciw\ciw\ciw\cib\cib,\cib\ciw\cib\ciw\cib,\cib\cib\ciw\ciw\ciw}$,
    $b: \s{\ciw\ciw\cib\ciw\ciw,\ciw\cib\ciw\cib\ciw}$.
    \qedhere
  \end{enumerate}
\end{proof}

Appendix~\ref{app:a} contains a list of all values over $P_4$ that are
realizable as monotone set coloring games with up to 5 cells.

\begin{proof}[Proof of Theorem~\ref{thm:main}]
  By Lemmas~\ref{lem:atom-realizable}--\ref{lem:realizable-values},
  the class of realizable games contains all atomic games, is closed
  under the sum and map operations, and contains the four primitive
  gadget games. By definition, it is also closed under equivalence. By
  Corollary~\ref{cor:closure}, this class of games is closed under the
  primitive gadgets, and therefore by Proposition~\ref{prop:gadgets},
  it contains all finite passable games.
\end{proof}

\section{The size of set coloring gadgets}

Theorem~\ref{thm:main} was only concerned with the existence of
monotone set coloring games, and not with their size. We can define
the {\em size} of a monotone set coloring game as the cardinality of
its carrier. It is then a natural question to ask about the size of
the set coloring games constructed in Theorem~\ref{thm:main}. The
following remarks discuss the size of set coloring implementations of
the various gadgets, and how they can be improved. We start with the
primitive gadgets. For brevity, we say that a game $G$ is {\em
  realizable with size} $p$ to mean that it is realizable as a
monotone set coloring game of size $p$.

\begin{proposition}\label{prop:gadgets-quant}
  If $G$ and $H$ are realizable with size $p$ and $q$, respectively,
  then
  \begin{enumerate}\alphalabels
  \item $\g{\top|G}$ and $\g{G|\bot}$ are each realizable with size
    $p+1$,
    \label{prop:gadgets-quant-a}
  \item $\g{\g{\top|G},\g{\top|H}|\g{G|\bot},\g{H|\bot}}$ is
    realizable with size $\max\s{p,q}+2$, and
    \label{prop:gadgets-quant-b}
  \item $\g{G,\g{\top|H}|\g{G|\bot},H}$ is realizable with size
    $p+q+5$. 
    \label{prop:gadgets-quant-c}
  \end{enumerate}
\end{proposition}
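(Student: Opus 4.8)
The plan is to reduce each part to the constructions of Section~\ref{ssec:new-closure}, while tracking carrier sizes. Recall that the sum of two monotone set coloring games has as carrier the disjoint union of the two carriers (so sizes add, by Lemma~\ref{lem:sum-realizable}), and that the map operation leaves the carrier unchanged (so sizes are preserved, by Lemma~\ref{lem:map-realizable}). Parts \eqref{prop:gadgets-quant-a} and \eqref{prop:gadgets-quant-c} then follow at once. For \eqref{prop:gadgets-quant-a}, Lemma~\ref{lem:gadgets}\eqref{lem:gadgets-a1} gives $\g{\top|G}\eq\g{\top|a}+G$, and the gadget game $\g{\top|a}$ is realizable with a single cell by Lemma~\ref{lem:realizable-values}\eqref{lem:realizable-values-a1}; hence $\g{\top|G}$ is realizable with size $1+p=p+1$, and $\g{G|\bot}$ likewise via the duals Lemma~\ref{lem:gadgets}\eqref{lem:gadgets-a2} and Lemma~\ref{lem:realizable-values}\eqref{lem:realizable-values-a2}. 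For \eqref{prop:gadgets-quant-c}, Lemma~\ref{lem:gadgets}\eqref{lem:gadgets-c} gives $\g{G,\g{\top|H}|\g{G|\bot},H}\eq X+G+H$ with $X=\g{a,\g{\top|b}|\g{a|\bot},b}$ realizable with $5$ cells (the game of Example~\ref{exa:notation}, Lemma~\ref{lem:realizable-values}\eqref{lem:realizable-values-c}); hence the size is $5+p+q=p+q+5$.

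The only part requiring a new idea is \eqref{prop:gadgets-quant-b}. The naive approach, combining Lemma~\ref{lem:gadgets}\eqref{lem:gadgets-b} with the $2$-cell realization of Lemma~\ref{lem:realizable-values}\eqref{lem:realizable-values-b}, realizes the choice gadget with size $2+p+q$, which exceeds the claimed $\max\s{p,q}+2$ whenever $p$ and $q$ are both positive. The source of the waste is that realizing $X+G+H$ keeps disjoint carriers for $G$ and $H$, even though the selecting function $g$ consults the payoff of at most one of them in any final position. I would therefore let $S$ and $T$ realize $G$ and $H$ on a \emph{common} carrier $C$ of size $\max\s{p,q}$, padding the smaller game with dummy cells that its payoff ignores. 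The proposed realization $U$ then has carrier $\s{c_1,c_2}\sqcup C$ with two fresh choice cells $c_1,c_2$ and payoff
\[
\payoff{U}(p) = g\bigl(\chi(p(c_1),p(c_2)),\,\payoff{S}(p\restr{C}),\,\payoff{T}(p\restr{C})\bigr),
\]
where $\chi(p(c_1),p(c_2))\in P_4$ equals $\top$, $a$, $b$, or $\bot$ according as both, only $c_2$, only $c_1$, or neither of the choice cells is colored $\top$, so that $g$ returns $\top$, $\payoff{S}(p\restr{C})$, $\payoff{T}(p\restr{C})$, or $\bot$ respectively. This $\payoff{U}$ is monotone as a composite of monotone maps, so $U$ is a genuine monotone set coloring game of size $\max\s{p,q}+2$.

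It remains to prove the correctness statement $\sem{U}\eq\g{\g{\top|G},\g{\top|H}|\g{G|\bot},\g{H|\bot}}$, and this is where I expect the real work. I would argue by induction over the partial colorings $\sigma$ of $C$, showing that whenever $c_1$ and $c_2$ are still empty the corresponding position is equivalent to $\g{\g{\top|G_\sigma},\g{\top|H_\sigma}|\g{G_\sigma|\bot},\g{H_\sigma|\bot}}$, where $G_\sigma$ and $H_\sigma$ are the combinatorial values of $S$ and $T$ at $\sigma$. When $\sigma$ colors all of $C$ the values $G_\sigma,H_\sigma$ are atomic and the two remaining cells form the two-cell choice game over these atoms, whose value is exactly this choice gadget by the computation underlying Lemma~\ref{lem:realizable-values}\eqref{lem:realizable-values-b}, so the base case holds. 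In the inductive step, coloring a choice cell collapses $U$ to a single-choice-cell game which is precisely the forcing construction of part~\eqref{prop:gadgets-quant-a} applied to the appropriate sub-game, and so reproduces exactly the four options $\g{\top|G_\sigma}$, $\g{\top|H_\sigma}$, $\g{G_\sigma|\bot}$, $\g{H_\sigma|\bot}$ of the target. The main obstacle is the remaining options, in which a player colors a cell of the shared region $C$ before the choice is resolved: by the induction hypothesis such an option is itself a choice gadget between updated sub-values $G_{\sigma'},H_{\sigma'}$, and I must show it is a gift horse so that it can be discarded. Since coloring $C$ is a legal move in both $S$ and $T$ and these are monotone, a black move produces good left options with $G_\sigma\leq G_{\sigma'}$ and $H_\sigma\leq H_{\sigma'}$; the key lemma I would isolate is that under this linkage $\g{\g{\top|G_{\sigma'}},\g{\top|H_{\sigma'}}|\g{G_{\sigma'}|\bot},\g{H_{\sigma'}|\bot}}\tri\g{\g{\top|G_\sigma},\g{\top|H_\sigma}|\g{G_\sigma|\bot},\g{H_\sigma|\bot}}$, and dually for white moves. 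This domination fails for \emph{unrelated} $G_{\sigma'},H_{\sigma'}$ but should hold here precisely because each is an option of the corresponding sub-value, which forbids the degenerate jumps (such as a value $\eq\bot$ acquiring a left option $\eq\top$) that would break it; pinning down this step cleanly is the crux of the argument. Once it is in hand, the gift horse lemma removes all shared-region options and the induction closes, giving $\sem{U}\eq\g{\g{\top|G},\g{\top|H}|\g{G|\bot},\g{H|\bot}}$ as required.
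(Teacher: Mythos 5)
Your proposal takes essentially the same route as the paper's own proof. Parts \eqref{prop:gadgets-quant-a} and \eqref{prop:gadgets-quant-c} are handled identically (read the sizes off Lemma~\ref{lem:gadgets} together with Lemmas~\ref{lem:sum-realizable}--\ref{lem:realizable-values}), and for part \eqref{prop:gadgets-quant-b} your shared-carrier realization with two choice cells is exactly the paper's optimization: realize $\g{\g{\top|a},\g{\top|b}|\g{a|\bot},\g{b|\bot}}+G+H$ without taking disjoint carriers for $G$ and $H$, and dispose of the new options created by moves in the shared carrier as gift horses.

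Two steps are left open in your writeup, and both deserve comment. First, the step you call the crux is not delicate at all, and in particular needs neither monotonicity of $S,T$ nor goodness of options: the chain in the proof of Lemma~\ref{lem:gadgets}\eqref{lem:gadgets-b} applies verbatim with $H$ replaced by $H^L$. Explicitly, $G^L\tri G$ holds for \emph{any} left option, hence $G^L\leq\g{\top|G}$, hence $G^L\tri K$, hence $\g{G^L|\bot}\leq K$ (the right options $\g{G|\bot},\g{H|\bot}$ of $K$ are handled by the minimality of $\bot$); and since $\g{G^L|\bot}$ is a right option of $\g{\g{\top|G^L},\g{\top|H^L}|\g{G^L|\bot},\g{H^L|\bot}}$, that game is $\tri K$, i.e., a left gift horse. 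The argument only ever passes through the $G$-side right option of the candidate, which is why replacing $H$ by $H^L$ on the other side costs nothing; your worry about ``degenerate jumps'' evaporates because only the relation $G^L\tri G$ and $\bot\leq{}$everything are used. Second, your padding device is an additional unproven claim: you need that adjoining a cell ignored by the payoff function does not change the value of a monotone set coloring game. This is true (by Remark~\ref{rem:monotone-set-is-monotone} every position's value $G$ satisfies $G\tri G$, so the ``color the dummy cell'' options are gift horses on both sides, with base case $\g{a|a}\eq[a]$), but it requires its own induction. The paper sidesteps it entirely by taking the \emph{ordinary union} of the carriers--rename cells so the smaller carrier sits inside the larger one--so that a move off the shared part yields $X+G^L+H$, already a gift horse by Lemma~\ref{lem:gadgets}\eqref{lem:gadgets-b}, and a move on the shared part yields $X+G^L+H^L$, a gift horse by the chain above; no dummy cells ever appear.
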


\begin{proof}
  Parts {\eqref{prop:gadgets-quant-a}} and
  {\eqref{prop:gadgets-quant-c}} require no special proof, as the
  claimed sizes are just the sizes of the games constructed in the
  proof of Theorem~\ref{thm:main}. For example, $\g{\top|G}$ can be
  realized as $\g{\top|a} + G$ by Lemma~\ref{lem:gadgets}, where
  $\g{\top|a}$ has a carrier of size $1$ by the proof of
  Lemma~\ref{lem:realizable-values}, $G$ has a carrier of size $p$,
  and the sum has a carrier of size $p+1$ by the proof of
  Lemma~\ref{lem:sum-realizable}.  However, there is an important
  optimization in part {\eqref{prop:gadgets-quant-b}}: it turns out
  that in the game
  $\g{\g{\top|a},\g{\top|b}|\g{a|\bot},\g{b|\bot}}+G+H$, it is not
  actually necessary to take the disjoint union of the carriers of $G$
  and $H$; instead, the ordinary union suffices. Informally, this is
  because neither player has an incentive to play in $G$ or $H$ before
  the game $X = \g{\g{\top|a},\g{\top|b}|\g{a|\bot},\g{b|\bot}}$ has
  reached an atomic position, at which point at most one of $G$ or $H$
  needs to be played. More formally, if the carriers of $G$ and $H$
  overlap, the game $G+H$ potentially loses some left options of the
  form $G^L+H$ and $G+H^L$ (which does not matter since they were left
  gift horses anyway), and instead gains new left options of the form
  $G^L+H^L$ (namely, when the left player plays in the common carrier
  of $G$ and $H$). It is easy to check that in the proof of
  Lemma~\ref{lem:gadgets}{\eqref{lem:gadgets-b}}, $X+G^L+H^L$ is still
  a left gift horse for $K$, and therefore any additional left options
  caused by overlapping carriers do not change the result. On the
  other hand, in Proposition~\ref{lem:gadgets}{\eqref{lem:gadgets-c}},
  this is not the case, i.e., when $X=\g{a,\g{\top|b}|\g{a|\bot},b}$,
  then $X+G^L+H^L$ is not in general a left gift horse for $X+G+H$.
\end{proof}

Next, we consider the size of the one-sided choice gadgets.

\begin{proposition}\label{prop:one-sided-choice-quant}
  If $G_1,\ldots,G_n$ are realizable with size $p_1,\ldots,p_n$,
  respectively, then $\g{G_1,\ldots,G_n|\bot}$ is realizable with size
  $\max\s{p_1,\ldots,p_n} + 2\ceil{\log_2 n} + 1$.
\end{proposition}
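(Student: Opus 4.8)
The plan is to prove the bound by induction, organizing the $n$-ary choice as a balanced binary tree of binary choice gadgets and exploiting the overlap optimization of Proposition~\ref{prop:gadgets-quant}\eqref{prop:gadgets-quant-b}. First I would reduce to the case $n=2^d$, where $d=\ceil{\log_2 n}$. To do this, pad the family with $2^d-n$ extra left options, each a duplicate of $G_1$. Each duplicate is a left gift horse, since $G_1\leq G_1$ gives $G_1\tri\g{G_1,\ldots,G_n|\bot}$ by clause~(2) of the definition of $\tri$; so by the gift horse lemma the padded game has the same value. Since the duplicates reuse $G_1$ and hence do not enlarge $\max\s{p_1,\ldots,p_n}$, and since $\ceil{\log_2 2^d}=d=\ceil{\log_2 n}$, both quantities in the claimed bound are unaffected by the padding.

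For the induction on $d$, the base case $d=0$ (that is, $n=1$) is exactly the right forcing gadget: by Proposition~\ref{prop:gadgets-quant}\eqref{prop:gadgets-quant-a}, $\g{G_1|\bot}$ is realizable with size $p_1+1=\max\s{p_1}+2\ceil{\log_2 1}+1$. For the inductive step with $n=2^d$, set $A=\g{G_1,\ldots,G_{2^{d-1}}|\bot}$ and $B=\g{G_{2^{d-1}+1},\ldots,G_{2^d}|\bot}$. By the induction hypothesis each is realizable, with $|A|$ equal to the maximum over the first half plus $2(d-1)+1$, and similarly for $|B|$. Applying the binary choice gadget of Proposition~\ref{prop:gadgets-quant}\eqref{prop:gadgets-quant-b} to $A$ and $B$ yields a game realizable with size $\max\s{|A|,|B|}+2=\max\s{p_1,\ldots,p_{2^d}}+2(d-1)+1+2=\max\s{p_1,\ldots,p_{2^d}}+2d+1$, which is the desired bound.

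The remaining point is to verify that the value produced by this gadget really is $\g{G_1,\ldots,G_{2^d}|\bot}$. The choice gadget applied to $A$ and $B$ has value $\g{\g{\top|A},\g{\top|B}|\g{A|\bot},\g{B|\bot}}$. Here $\g{\top|A}=\upl(G_1,\ldots,G_{2^{d-1}})$ is left equivalent to $G_1,\ldots,G_{2^{d-1}}$, and likewise $\g{\top|B}$ is left equivalent to $G_{2^{d-1}+1},\ldots,G_{2^d}$; meanwhile $A$ and $B$ each have the form $\g{\cdots|\bot}$, so $A\tri\bot$ and $B\tri\bot$, whence $\g{A|\bot}\eq\bot$ and $\g{B|\bot}\eq\bot$ by the same simplification already used in Lemma~\ref{lem:one-sided-choice}. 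Substituting along these left equivalences and deleting the duplicate right option $\bot$ collapses the gadget to $\g{G_1,\ldots,G_{2^d}|\bot}$; this is precisely the identity underlying Lemmas~\ref{lem:one-sided-choice} and~\ref{lem:nary-one-sided-choice}, now tracked quantitatively.

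I expect the only delicate point to be the size accounting. The bound retains $\max\s{p_1,\ldots,p_n}$ rather than a sum precisely because Proposition~\ref{prop:gadgets-quant}\eqref{prop:gadgets-quant-b} permits the carriers of $A$ and $B$ to overlap, so each level of the tree costs only two fresh cells and the leaves can share a common carrier of size $\max\s{p_1,\ldots,p_n}$. Since that proposition is stated for arbitrary realizable $G,H$, it applies verbatim with $G:=A$ and $H:=B$ at every level, and no extra bookkeeping about the internal structure of $A$ and $B$ is required; the recursion $\mathrm{size}(2^d)=\max\s{|A|,|B|}+2$ with $\mathrm{size}(1)=p+1$ then solves to $\max_i p_i+2d+1$ as claimed.
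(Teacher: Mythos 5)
Your proposal is correct and takes essentially the same route as the paper's proof: a balanced binary tree of choice gadgets, using Proposition~\ref{prop:gadgets-quant}\eqref{prop:gadgets-quant-b} (whose $\max\s{p,q}+2$ bound reflects the carrier-overlap optimization) at each internal node and the forcing gadget of Proposition~\ref{prop:gadgets-quant}\eqref{prop:gadgets-quant-a} at the leaves. The only differences are presentational: the paper handles general $n$ by ``nearly halving'' instead of padding to a power of two, and justifies the collapse to $\g{G_1,\ldots,G_n|\bot}$ by expanding reversible options, whereas you reuse the left-equivalence and $\g{A|\bot}\eq\bot$ simplifications from Lemma~\ref{lem:one-sided-choice}.
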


\begin{proof}
  We first claim that if both $G = \g{G_1,\ldots,G_k|\bot}$ and
  $H = \g{H_1,\ldots,H_l|\bot}$ are realizable with size $p$, then
  $\g{G_1,\ldots,G_k,H_1,\ldots,H_l|\bot}$ is realizable with size
  $p+2$. Indeed, let
  $K=\g{\g{\top|G},\g{\top|H}|\g{G|\bot},\g{H|\bot}}$. Then $K$ is
  realizable with size $p+2$ by
  Proposition~\ref{prop:gadgets-quant}{\eqref{prop:gadgets-quant-b}}. Also,
  by expanding reversible options, it is easily seen that
  $K\eq \g{G_1,\ldots,G_k,H_1,\ldots,H_l|\bot}$.

  The proposition then follows by induction. The base case $n=1$ holds
  by
  Proposition~\ref{prop:gadgets-quant}{\eqref{prop:gadgets-quant-a}}.
  The induction step is an application of the first claim above,
  halving (or nearly halving, in case $n$ is odd) the number of
  options in each step.
\end{proof}

\begin{remark}\label{rem:digits}
  The realization of the one-sided choice gadget given in
  Proposition~\ref{prop:one-sided-choice-quant} admits the following
  concrete description. Let $k=\ceil{\log_2 n}$. The gadget's carrier
  consists of the union of the carriers of $G_1,\ldots,G_n$, together
  with $2k+1$ additional cells $a_1,b_1,\ldots,a_k,b_k,c$.  Each pair
  of cells $(a_i,b_i)$ implements a choice gadget, and we can think of
  these pairs as the ``digits'' of a $k$-bit binary number. The player
  whose turn it is starts filling in the digits from left to right,
  and the other player has no choice but to respond in the same digit
  (or they will lose the game immediately).  In this way, the player
  chooses one of up to $2^k$ possibilities. After all the digits are
  chosen, the player whose turn it is plays in cell $c$. If that
  player was Left, the game continues in whichever of the components
  $G_1,\ldots,G_n$ was chosen. If that player was Right, they win
  immediately.
\end{remark}

We now consider the size of the game $\g{G|H}$ in the coupling lemma
(Lemma~\ref{lem:coupling}).  It turns out that there is a useful
optimization in case the game is of the form
$\g{\g{\top|G}|\g{H|\bot}}$, so we consider this as a special case.

\begin{proposition}\label{prop:coupling-quant}
  Suppose $G$ and $H$ are realizable with size $p$ and $q$,
  respectively.
  \begin{enumerate}\alphalabels
  \item \label{prop:coupling-quant-a} If $\g{G|H}$ is locally
    monotone, then it is realizable with size $p+q+5$.
  \item \label{prop:coupling-quant-b} If $\g{\g{\top|G}|\g{H|\bot}}$
    is locally monotone, then it is realizable with size $p+q+5$.
  \end{enumerate}
\end{proposition}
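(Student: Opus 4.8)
The plan is to handle both parts of Proposition~\ref{prop:coupling-quant} by giving explicit set coloring constructions, reusing the coupling lemma's structural insight but being careful about carrier sizes. For part~\eqref{prop:coupling-quant-a}, I would follow the proof of Lemma~\ref{lem:coupling}: since $\g{G|H}$ is locally monotone, we have $H\leq\g{G|H}\leq G$, so $\g{G|H}\eq\g{G,\g{\top|H}|\g{G|\bot},H}$ by the gift horse lemma. This is exactly the coupling gadget, so I can invoke Proposition~\ref{prop:gadgets-quant}\eqref{prop:gadgets-quant-c}, which gives size $p+q+5$ directly. The only thing to check is that the equivalence preserves realizability at the claimed size, which it does because the coupling gadget realization from Lemma~\ref{lem:gadgets}\eqref{lem:gadgets-c} uses the disjoint union of the carriers of $G$ and $H$ together with the $5$-cell gadget game from Lemma~\ref{lem:realizable-values}\eqref{lem:realizable-values-c}.

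For part~\eqref{prop:coupling-quant-b}, the point is that the doubly-forced form $\g{\g{\top|G}|\g{H|\bot}}$ admits a more economical realization than first building $\g{\top|G}$ and $\g{H|\bot}$ separately (which would cost extra forcing cells) and then coupling. I would exhibit a single gadget game $X$ over $P_4$, analogous to the one in Lemma~\ref{lem:realizable-values}\eqref{lem:realizable-values-c} but adapted so that $X+G+H\eq\g{\g{\top|G}|\g{H|\bot}}$, realized with a $5$-cell carrier, and then take the sum with $G$ and $H$ to obtain size $p+q+5$. The natural candidate is to modify the payoff conditions so that the roles of $a$ and $b$ are each wrapped in a forcing layer internal to the gadget; concretely, I would look for a $5$-cell monotone set coloring game over $P_4$ whose combinatorial value is $\g{\g{\top|a}|\g{b|\bot}}$, then appeal to Lemma~\ref{lem:gadgets}-style reasoning to substitute $G$ for $a$ and $H$ for $b$. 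I would verify the substitution by an induction on $G$ and $H$ parallel to the proof of Lemma~\ref{lem:gadgets}\eqref{lem:gadgets-c}, checking that the relevant options are gift horses for $K=\g{\g{\top|G}|\g{H|\bot}}$; local monotonicity of $K$ is exactly the hypothesis we are given, and it is what guarantees that the left and right options of the gadget form act as gift horses.

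The main obstacle I expect is finding the explicit $5$-cell gadget game for part~\eqref{prop:coupling-quant-b} and confirming that it is a gadget game in the precise sense that the substitution $a\mapsto G$, $b\mapsto H$ commutes with the sum, i.e.\ that $X+G+H$ is equivalent to the game obtained from $X$ by replacing $a$ and $b$. Unlike the forcing and choice gadgets, which are robust to overlapping carriers, the coupling-style gadgets are sensitive to the precise option structure, so the induction must track that no spurious options spoil the equivalence. I would therefore prove the substitution property as a separate inductive claim, mirroring Lemma~\ref{lem:gadgets}, and then combine it with Lemmas~\ref{lem:sum-realizable} and~\ref{lem:realizable-values} to read off the size bound $p+q+5$. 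The doubly-forced structure is what makes the $5$-cell bound achievable without the extra overhead that a naive composition of Proposition~\ref{prop:gadgets-quant}\eqref{prop:gadgets-quant-a} and part~\eqref{prop:coupling-quant-a} would incur.
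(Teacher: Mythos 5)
Your part~\eqref{prop:coupling-quant-a} is correct and is exactly the paper's argument: reuse Lemma~\ref{lem:coupling} to get $\g{G|H}\eq\g{G,\g{\top|H}|\g{G|\bot},H}$ and quote Proposition~\ref{prop:gadgets-quant}\eqref{prop:gadgets-quant-c}. Part~\eqref{prop:coupling-quant-b}, however, has a fatal gap: the $5$-cell gadget game you propose to look for does not exist. Over $P_4$ (where $a$ and $b$ are incomparable), the value $\g{\g{\top|a}|\g{b|\bot}}$ is not even locally passable: a good left option would require $\g{\g{\top|a}|\g{b|\bot}}\leq\g{\top|a}$, which by the definition of $\leq$ forces $\g{b|\bot}\leq[a]$ and hence $b\leq a$, which is false; the absence of a good right option is dual. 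Since every monotone set coloring game has a monotone, hence passable, value (Remark~\ref{rem:monotone-set-is-monotone}), and local passability $G\tri G$ is invariant under equivalence, \emph{no} monotone set coloring game of any size realizes this value. This is not a fixable bookkeeping issue: the hypothesis that $K=\g{\g{\top|G}|\g{H|\bot}}$ is locally monotone is a genuine joint constraint on $G$ and $H$ (namely $\g{H|\bot}\leq K\leq\g{\top|G}$), and it fails for the ``generic'' instance $G=[a]$, $H=[b]$, so there is no universal gadget into which one could substitute.

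The paper's proof of part~\eqref{prop:coupling-quant-b} needs no new gadget at all. From local monotonicity, $\g{H|\bot}\leq K\leq\g{\top|G}$; unfolding the definition of $\leq$, the right option $G$ of $\g{\top|G}$ yields $K\tri G$, and the left option $H$ of $\g{H|\bot}$ yields $H\tri K$. Thus $H$ is a left gift horse and $G$ is a right gift horse for $K$, so $K\eq\g{H,\g{\top|G}|\g{H|\bot},G}$, which is precisely the coupling gadget of Proposition~\ref{prop:gadgets-quant}\eqref{prop:gadgets-quant-c} with the roles of $G$ and $H$ interchanged, giving size $q+p+5=p+q+5$. Your instinct that the doubly-forced form should beat the naive composition (which costs $p+q+7$) was right, but the mechanism is a direct gift-horse argument on $K$ itself, not a new substitution lemma.
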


\begin{proof}
  {\eqref{prop:coupling-quant-a}} As shown in the proof of
  Lemma~\ref{lem:coupling}, we have
  $\g{G|H}\eq\g{G,\g{\top|H}|\g{G|\bot},H}$, so the claim follows by
  Proposition~\ref{prop:gadgets-quant}{\eqref{prop:gadgets-quant-c}}.
  \enskip {\eqref{prop:coupling-quant-b}} Let
  $K=\g{\g{\top|G}|\g{H|\bot}}$. By definition of local monotonicity,
  we have $\g{H|\bot} \leq K \leq \g{\top|G}$.  By definition of the
  order, this implies $H\tri K$ and $K\tri G$, so that $H$ and $G$ are
  left and right gift horses for $K$, respectively. Therefore $K$ is
  equivalent to $\g{H,\g{\top|G}|\g{H|\bot},G}$, and the result
  follows again by
  Proposition~\ref{prop:gadgets-quant}{\eqref{prop:gadgets-quant-c}}.
\end{proof}

We can now consider the size of the game in
Lemma~\ref{lem:nary-coupling}.

\begin{proposition}\label{prop:nary-coupling-quant}
  Consider a game $K=\g{G_1,\ldots,G_n|H_1,\ldots,H_m}$, where
  $G_1,\ldots,G_n,H_1,\ldots,H_m$ are realizable with size
  $p_1,\ldots,p_n,q_1,\ldots,q_m$, respectively.
  \begin{enumerate}\alphalabels
  \item \label{prop:nary-coupling-quant-a} If $K$ is locally
    semi-monotone, then $K$ is realizable with size
    $\max\s{p_1,\ldots,p_n}+\max\s{q_1,\ldots,q_m} + 2\ceil{\log_2 n}
    + 2\ceil{\log_2 m} + 7$.
  \item \label{prop:nary-coupling-quant-b} If $K$ is locally passable,
    then $K$ is realizable with size
    $2\max\s{p_1,\ldots,p_n,q_1,\ldots,q_m} + 2\ceil{\log_2 n}
    + 2\ceil{\log_2 m} + 10$.
  \end{enumerate}
\end{proposition}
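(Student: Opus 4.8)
We treat the two parts in turn, in each case reducing to constructions already analyzed. For \eqref{prop:nary-coupling-quant-a}, the plan is to follow the proof of Lemma~\ref{lem:nary-coupling} while keeping track of sizes. Set $G=\g{G_1,\ldots,G_n|\bot}$ and $H=\g{\top|H_1,\ldots,H_m}$. By Proposition~\ref{prop:one-sided-choice-quant}, $G$ is realizable with size $\max\s{p_1,\ldots,p_n}+2\ceil{\log_2 n}+1$, and dually $H$ is realizable with size $\max\s{q_1,\ldots,q_m}+2\ceil{\log_2 m}+1$. As shown in that lemma, the game $K'=\g{\g{\top|G}|\g{H|\bot}}$ is locally monotone and satisfies $K'\eq K$, so Proposition~\ref{prop:coupling-quant}\eqref{prop:coupling-quant-b} realizes $K'$ (and hence $K$) with size equal to the sum of the sizes of $G$ and $H$ plus $5$. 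Adding the two expressions gives exactly $\max\s{p_1,\ldots,p_n}+\max\s{q_1,\ldots,q_m}+2\ceil{\log_2 n}+2\ceil{\log_2 m}+7$, as claimed.

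For \eqref{prop:nary-coupling-quant-b}, the plan is to enlarge $K$ to an equivalent \emph{locally semi-monotone} game and then invoke part \eqref{prop:nary-coupling-quant-a}. Since $K$ is locally passable it has a good left option or a good right option; passing to the dual if necessary, assume it has a good left option $G_i$, so $K\leq G_i$. The key step is to adjoin the new right option $R=\g{G_i|\bot}$. On one hand, $R$ is a right gift horse for $K$: since $G_i$ is the left option of $R$ and $K\leq G_i$, condition~(2) in the definition of $\tri$ yields $K\tri R$, so the gift horse lemma gives $K\eq\g{G_1,\ldots,G_n|H_1,\ldots,H_m,R}=:K'$. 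On the other hand, $R$ is a good right option, i.e.\ $R\leq K$: its only left option $G_i$ satisfies $G_i\tri K$ (because $G_i$ is itself a left option of $K$), and its right option $\bot$ satisfies $\bot\leq H_j$ for every $H_j$. Hence $K'$ retains the good left option $G_i$ and acquires the good right option $R$, so $K'$ is locally semi-monotone with $K'\eq K$.

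It then remains to apply part \eqref{prop:nary-coupling-quant-a} to $K'$ and simplify. The left options of $K'$ are $G_1,\ldots,G_n$ with sizes $p_1,\ldots,p_n$, and its right options are $H_1,\ldots,H_m,R$, where $R$ is realizable with size $p_i+1$ by Proposition~\ref{prop:gadgets-quant}\eqref{prop:gadgets-quant-a}. Writing $M=\max\s{p_1,\ldots,p_n,q_1,\ldots,q_m}$ and using $\max\s{q_1,\ldots,q_m,p_i+1}\leq M+1$ together with $\ceil{\log_2(m+1)}\leq\ceil{\log_2 m}+1$, the bound from part \eqref{prop:nary-coupling-quant-a} collapses to at most $2M+2\ceil{\log_2 n}+2\ceil{\log_2 m}+10$, as claimed. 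The main obstacle is precisely the choice in \eqref{prop:nary-coupling-quant-b}: the obvious candidate $\g{K|\bot}$ is at once a good right option and a right gift horse, but its size depends circularly on that of $K$; the remedy is to instead reflect an \emph{already} good option as $\g{G_i|\bot}$, whose size is controlled by $p_i$, and to verify that this single modification suffices to restore semi-monotonicity.
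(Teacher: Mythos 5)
Your proof is correct and follows essentially the same route as the paper's: part \eqref{prop:nary-coupling-quant-a} via Lemma~\ref{lem:nary-coupling} combined with Propositions~\ref{prop:one-sided-choice-quant} and \ref{prop:coupling-quant}\eqref{prop:coupling-quant-b}, and part \eqref{prop:nary-coupling-quant-b} by adjoining the right option $\g{G_i|\bot}$ to obtain a locally semi-monotone $K'\eq K$ and then applying part \eqref{prop:nary-coupling-quant-a}. The only difference is that you verify inline (correctly, via the gift horse lemma and the definition of $\leq$) the fact that the paper delegates to \cite[Lemma~6.7]{Selinger2021}.
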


\begin{proof}
  {\eqref{prop:nary-coupling-quant-a}} As in the proof of
  Lemma~\ref{lem:nary-coupling}, $K$ can be realized as
  $\g{\g{\top|G}|\g{H|\bot}}$, where $G=\g{G_1,\ldots,G_n|\bot}$ and
  $H=\g{\top|H_1,\ldots,H_m}$. By
  Proposition~\ref{prop:one-sided-choice-quant}, $G$ is realizable
  with size $p=\max\s{p_1,\ldots,p_n}+2\ceil{\log_2 n}+1$, and $H$ is
  realizable with size
  $q=\max\s{q_1,\ldots,q_m} + 2\ceil{\log_2 m} + 1$. By
  Proposition~\ref{prop:coupling-quant}{\eqref{prop:nary-coupling-quant-b}},
  $\g{\g{\top|G}|\g{H|\bot}}$ is realizable with size $p+q+5$.  The
  result follows.
  
  {\eqref{prop:nary-coupling-quant-b}} Since $K$ is passable, it has
  at least one good left option or right option. Without loss of
  generality, assume that $K$ has a good left option $G_i$; the other
  case is dual. It is easy to see (as in the proof of
  \cite[Lemma~6.7]{Selinger2021}) that
  $K'=\s{G_1,\ldots,G_n|H_1,\ldots,H_m,\g{G_i|\bot}}$ is locally
  semi-monotone and equivalent to $K$. By part
  {\eqref{prop:nary-coupling-quant-a}}, $K'$, and therefore $K$, is
  realizable of size
  $\max\s{p_1,\ldots,p_n}+\max\s{q_1,\ldots,q_m,p_i+1} + 2\ceil{\log_2
    n} + 2\ceil{\log_2 (m+1)} + 7$. Since
  $\ceil{\log_2(m+1)}\leq \ceil{\log_2 m}+1$, this implies the claim.
\end{proof}

Putting together the above results, we can get an upper bound on the
size of the set coloring realizations of monotone and passable
games. We define the {\em depth} of a finite game $G$ in the obvious
way, i.e., atomic games have depth 0, and the depth of a composite
game is one more than the maximum depth of its options. We define the
{\em branching factor} of $G$ to be the maximum number of left options
or right options of any position occurring in $G$. The following is a
quantitative version of Theorem~\ref{thm:main}. As before, $A$ is some
fixed atom poset with top and bottom elements.

\begin{proposition}\label{prop:main-quant}
  Let $G$ be a finite game over $A$, and let $d$ and $b$ be its depth
  and branching factor, respectively.
  \begin{enumerate}\alphalabels
  \item \label{prop:main-quant-a} If $G$ is monotone, then $G$ is
    realizable with size $(2^d-1)(4\ceil{\log_2 b}+7)$.
  \item \label{prop:main-quant-b} If $G$ is passable, then $G$ is
    realizable with size $(2^d-1)(4\ceil{\log_2 b}+10)$.
  \end{enumerate}
\end{proposition}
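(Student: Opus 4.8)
The plan is to prove both parts simultaneously by induction on the depth $d$ of $G$, with Proposition~\ref{prop:nary-coupling-quant} supplying the induction step. The two stated bounds are deliberately engineered so that each step telescopes exactly, so the only real work is to check that the options of $G$ inherit the right hypotheses and then to run a short arithmetic computation.

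For the base case $d=0$, the game $G$ is atomic, hence realizable with size $0$ by Lemma~\ref{lem:atom-realizable}. Since $2^0-1=0$, both formulas evaluate to $0$ irrespective of the $\ceil{\log_2 b}$ term, so the base case holds; this also conveniently sidesteps the question of what $\ceil{\log_2 b}$ should mean when $G$ has no options.

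For the induction step I would write $G=\g{G_1,\ldots,G_n|H_1,\ldots,H_m}$, which is composite, so $n,m\geq 1$ and hence $b\geq 1$. Every option is a position of $G$, so it has depth at most $d-1$ and branching factor at most $b$; moreover, since all positions of a monotone (respectively passable) game are locally monotone (respectively locally passable), each option is again monotone (respectively passable). For part~\eqref{prop:main-quant-a} the induction hypothesis then realizes each $G_i$ and $H_j$ with size at most $(2^{d-1}-1)(4\ceil{\log_2 b}+7)$, using that the bound is monotone in both the depth and the branching factor. As $G$ is monotone it is locally semi-monotone, so Proposition~\ref{prop:nary-coupling-quant}\eqref{prop:nary-coupling-quant-a} gives a realization of size at most
\[
2(2^{d-1}-1)(4\ceil{\log_2 b}+7) + 2\ceil{\log_2 n} + 2\ceil{\log_2 m} + 7.
\]
Since $n,m\leq b$ we may replace $\ceil{\log_2 n}$ and $\ceil{\log_2 m}$ by $\ceil{\log_2 b}$, and the expression collapses via $2(2^{d-1}-1)+1=2^d-1$ to exactly $(2^d-1)(4\ceil{\log_2 b}+7)$. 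Part~\eqref{prop:main-quant-b} runs identically, using that $G$ is locally passable, invoking Proposition~\ref{prop:nary-coupling-quant}\eqref{prop:nary-coupling-quant-b} and the constant $10$ in place of $7$; the same telescoping identity makes the bound close up.

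The part I expect to demand the most attention is the bookkeeping rather than any genuine difficulty: one must verify that the induction hypothesis may legitimately be applied to each option even when that option has strictly smaller depth or branching factor (which is fine because the formula increases in both parameters) and that atomic options are absorbed correctly by the depth-$0$ instance of the formula. Once this is in hand, the proof reduces to the one-line identity $2(2^{d-1}-1)+1=2^d-1$, which is precisely why the additive constants were chosen to be $7$ and $10$.
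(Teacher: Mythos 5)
Your proof is correct and follows essentially the same route as the paper: induction on depth, with atomic games realizable with size $0$ as the base case, and Proposition~\ref{prop:nary-coupling-quant} (parts \eqref{prop:nary-coupling-quant-a} and \eqref{prop:nary-coupling-quant-b} respectively) supplying the induction step, closed by the identity $2(2^{d-1}-1)+1=2^d-1$. The only difference is that you spell out details the paper leaves implicit (options inherit monotonicity/passability, bounds for $\ceil{\log_2 n},\ceil{\log_2 m}$ via $n,m\leq b$, and monotonicity of the bound in $d$ and $b$), which is fine.
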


\begin{proof}
  We prove {\eqref{prop:main-quant-a}} by induction. Let
  $C = 4\ceil{\log_2 b}+7$. The claim holds for atomic games, since
  they are realizable with size 0. If
  $G=\s{G_1,\ldots,G_n|H_1,\ldots,H_m}$ is composite, then by
  the induction hypothesis, all of its options are realizable with size
  $p=(2^{d-1}-1)C$.  Applying
  Proposition~\ref{prop:nary-coupling-quant}\eqref{prop:nary-coupling-quant-a},
  we get that $G$ is realizable with size
  \[
    2p + 2\ceil{\log_2 n} + 2\ceil{\log_2 m} + 7 ~~\leq~~ 2p + C ~~=~~
    (2^d-1)C,
  \]
  as claimed.  The proof of {\eqref{prop:main-quant-b}} is almost
  identical, except using
  Proposition~\ref{prop:nary-coupling-quant}\eqref{prop:nary-coupling-quant-b}.
\end{proof}

\begin{remark}
  Naively, it would have been possible to obtain a bound on the size
  of the realization of passable games directly from
  Proposition~\ref{prop:main-quant}\eqref{prop:main-quant-a}, using
  the fact that by Theorem~\ref{thm:fundamental}, every passable game
  is equivalent to a monotone one. However, the translation from
  passable games to monotone games significantly increases the depth
  of the game, in the worst case by a factor of about 5. This would
  have resulted in a bound with a much larger base of exponentiation,
  say, proportional to $32^d$ instead of $2^d$. Conceptually, the
  reason we were able to obtain the better bound of
  Proposition~\ref{prop:main-quant}\eqref{prop:main-quant-b} is that
  while the translation to monotone games increases the depth, it does
  not add any additional coupling gadgets, i.e., the additional depth
  entirely comes from options of the form $\g{G_1,\ldots,G_n|\bot}$ or
  $\g{\top|H_1,\ldots,H_m}$. In fact, we could define the {\em
    coupling depth} of a game, analogous to depth, but such that it
  increases only when a game has non-trivial left and right
  options. In this case, the size of the set coloring realization can
  be bounded as a function of the coupling depth and a small
  logarithmic factor. The passage from passable to monotone games does
  not increase the coupling depth.
\end{remark}

\begin{remark}
  By Proposition~\ref{prop:one-sided-choice-quant} or
  Remark~\ref{rem:digits}, there are monotone set coloring games such
  that the number of cells is strictly smaller (even exponentially
  smaller) than the number of options the left player has in the
  canonical form. The smallest example we know is the game
  $G=\g{a_1,\ldots,a_8|\bot}$, where $a_1,\ldots,a_8$ are incomparable
  atoms.  It is realizable with size $7$ by
  Proposition~\ref{prop:one-sided-choice-quant}.
\end{remark}

\section{Conclusion and future work}

In this paper, we proved that all passable games are realizable as
monotone set coloring games. Our proof also sheds new light on the
structure of passable games: they are generated from the atomic games
by four distinct operations: left and right forcing, choice, and
coupling. Thus, the passable games form a kind of algebra with four
operations. It is an interesting question which equations, if any,
these operations satisfy, but we did not pursue this question
here. The method we gave in this paper is general, and could in
principle be used to show that passable games are realizable by other
classes of games, including non-self-dual games such as Shannon
games. This is left for future work.


\appendix
\section{Appendix: Values over $P_4$ that are realizable with up to 5 cells}
\label{app:a}

The following is a complete list of all game values over the poset
$P_4=\s{\top, a, b, \bot}$ that are realizable as monotone set
coloring game with up to 5 cells. We give a concrete realization of
each value.  Values that can be obtained by duality (exchanging $\top$
and $\bot$) or isomorphism (exchanging $a$ and $b$) have been omitted
from the list. We use $\emptyset$ to denote the empty set, and
$\epsilon$ to denote the unique position on $0$ cells.

\paragraph{0 cells:}~

\begin{mymath}
\begin{array}{ll}
  \val{\top} & a: \s{\epsilon},\enspace b: \s{\epsilon} \\
  \val{a} & a: \s{\epsilon},\enspace b: \emptyset \\
  \val{b} & a: \emptyset,\enspace b: \s{\epsilon} \\
  \val{\bot} & a: \emptyset,\enspace b: \emptyset \\
\end{array}
\end{mymath}

\paragraph{1 cell:}~

\begin{mymath}
\begin{array}{ll}
  \val{\g{\top\mid a}} & a: \s{\ciw},\enspace b: \s{\cib} \\
  \val{\g{\top\mid \bot}} & a: \s{\cib},\enspace b: \s{\cib} \\
\end{array}
\end{mymath}

\paragraph{2 cells:}

Values of the form $\g{\top|G}$ and $\g{G|\bot}$, where $G$ is as above, and:

\begin{mymath}
\begin{array}{ll}
  \val{\g{\g{\top\mid a},\g{\top\mid b}\mid \g{a\mid \bot},\g{b\mid \bot}}} & a: \s{\ciw\cib},\enspace b: \s{\cib\ciw} \\
\end{array}
\end{mymath}

\paragraph{3 cells:}

Values of the form $\g{\top|G}$ and $\g{G|\bot}$, where $G$ is as above, and:

\begin{mymath}
\begin{array}{ll}
  \val{\g{a,b\mid\bot}} & a: \s{\ciw\cib\cib,\cib\ciw\cib},\enspace b: \s{\ciw\cib\cib,\cib\cib\ciw} \\    
  \val{\g{\top\mid a,\g{b\mid \bot}}} & a: \s{\ciw\ciw\cib},\enspace b: \s{\ciw\cib\ciw,\cib\ciw\cib} \\
\end{array}
\end{mymath}

\paragraph{4 cells:}

Values of the form $\g{\top|G}$ and $\g{G|\bot}$, where $G$ is as above, and:

\begin{mymath}
\begin{array}{ll}
  \val{\g{a\mid a,\g{b\mid \bot}}} & a: \s{\ciw\ciw\cib\cib,\ciw\cib\ciw\cib,\cib\ciw\cib\ciw},\enspace b: \s{\cib\cib\ciw\ciw} \\ 
  \val{\g{\g{\top\mid a,b}\mid a,b}} & a: \s{\ciw\ciw\cib\cib,\cib\cib\ciw\ciw},\enspace b: \s{\ciw\cib\ciw\cib,\cib\ciw\cib\ciw} \\
  \val{\g{a,\g{\top\mid a,b}\mid \bot}} & a: \s{\ciw\ciw\cib\cib,\ciw\cib\ciw\cib,\cib\cib\cib\ciw},\enspace b: \s{\ciw\cib\cib\ciw,\cib\ciw\cib\cib} \\
  \val{\g{\g{\top\mid a}\mid a,\g{b\mid \bot}}} & a: \s{\ciw\ciw\ciw\cib,\ciw\cib\cib\ciw},\enspace b: \s{\cib\ciw\cib\ciw} \\
  \val{\g{\g{\top\mid a},\g{\top\mid b}\mid \bot}} & a: \s{\ciw\ciw\ciw\cib,\cib\cib\cib\ciw},\enspace b: \s{\ciw\ciw\cib\cib,\ciw\cib\cib\ciw} \\
  \val{\g{a,\g{\top\mid a,\g{b\mid \bot}}\mid \bot}} & a: \s{\ciw\ciw\cib\cib,\ciw\cib\ciw\cib},\enspace b: \s{\ciw\cib\cib\ciw,\cib\ciw\cib\cib} \\
\end{array}
\end{mymath}

\newpage

\paragraph{5 cells:}

Values of the form $\g{\top|G}$ and $\g{G|\bot}$, where $G$ is as above, and:

\begin{mymath}
\begin{array}{ll}
  \val{\g{a,b\mid a}} & a: \s{\ciw\ciw\ciw\ciw\cib,\ciw\ciw\ciw\cib\ciw},\enspace b: \s{\ciw\ciw\cib\ciw\cib,\ciw\cib\ciw\cib\ciw,\cib\cib\cib\ciw\ciw} \\
  \val{\g{\g{\top\mid a}\mid a,b}} & a: \s{\ciw\ciw\ciw\ciw\cib,\ciw\ciw\cib\cib\ciw,\cib\cib\ciw\ciw\ciw},\enspace b: \s{\ciw\cib\ciw\cib\ciw} \\
  \val{\g{a,b,\g{\top\mid a,b}\mid \bot}} & a: \s{\ciw\ciw\ciw\cib\cib,\ciw\ciw\cib\ciw\cib,\ciw\cib\cib\cib\ciw},\enspace b: \s{\ciw\ciw\cib\cib\cib,\ciw\cib\ciw\cib\ciw,\cib\cib\ciw\ciw\ciw} \\
  \val{\g{\g{a,\g{\top\mid b}\mid a}\mid a,b}} & a: \s{\ciw\ciw\ciw\cib\cib,\ciw\ciw\cib\ciw\cib,\ciw\cib\ciw\cib\ciw,\cib\cib\cib\ciw\ciw},\enspace b: \s{\ciw\cib\cib\ciw\ciw,\cib\ciw\ciw\cib\cib,\cib\ciw\cib\cib\ciw} \\
  \val{\g{a,\g{\top\mid b}\mid a,\g{b\mid \bot}}} & a: \s{\ciw\ciw\ciw\cib\cib,\ciw\cib\cib\ciw\ciw,\cib\ciw\cib\ciw\cib},\enspace b: \s{\ciw\ciw\ciw\cib\ciw,\ciw\cib\ciw\ciw\cib,\cib\cib\cib\ciw\ciw} \\
  \val{\g{a,\g{\top\mid b}\mid \g{a\mid \bot},b}} & a: \s{\ciw\ciw\ciw\cib\cib,\cib\ciw\cib\ciw\cib,\cib\cib\ciw\ciw\ciw},\enspace b: \s{\ciw\ciw\cib\ciw\ciw,\ciw\cib\ciw\cib\ciw} \\
  \val{\g{a,\g{b,\g{\top\mid a}\mid b}\mid \bot}} & a: \s{\ciw\ciw\ciw\cib\cib,\ciw\ciw\cib\ciw\cib,\cib\cib\ciw\cib\ciw},\enspace b: \s{\ciw\cib\cib\ciw\ciw,\cib\cib\ciw\ciw\ciw} \\
  \val{\g{a,\g{\top\mid a,b}\mid a,\g{b\mid \bot}}} & a: \s{\ciw\ciw\ciw\cib\cib,\ciw\ciw\cib\ciw\cib,\ciw\cib\ciw\cib\ciw},\enspace b: \s{\ciw\cib\cib\ciw\ciw,\cib\ciw\cib\ciw\cib} \\
  \val{\g{a,\g{\top\mid b}\mid \g{a\mid a,\g{b\mid \bot}}}} & a: \s{\ciw\ciw\ciw\cib\cib,\ciw\ciw\cib\ciw\cib,\ciw\cib\ciw\ciw\cib,\cib\ciw\ciw\cib\ciw,\cib\ciw\cib\ciw\ciw},\enspace b: \s{\ciw\ciw\ciw\cib\cib,\ciw\cib\ciw\cib\ciw,\cib\cib\ciw\ciw\ciw} \\
  \val{\g{\top\mid \g{a\mid a,\g{b\mid \bot}},\g{b\mid \bot}}} & a: \s{\ciw\ciw\ciw\cib\cib,\ciw\ciw\cib\ciw\cib,\ciw\cib\ciw\ciw\cib,\cib\ciw\ciw\cib\ciw},\enspace b: \s{\ciw\ciw\cib\ciw\cib,\ciw\ciw\cib\cib\ciw,\cib\cib\ciw\ciw\ciw} \\
  \val{\g{a,\g{\top\mid b}\mid \g{a\mid \bot},\g{b\mid \bot}}} & a: \s{\ciw\ciw\ciw\cib\cib,\ciw\ciw\cib\ciw\cib,\ciw\cib\ciw\cib\ciw},\enspace b: \s{\ciw\cib\cib\ciw\ciw,\cib\ciw\cib\ciw\ciw} \\
  \val{\g{a,\g{\top\mid b}\mid a,\g{b\mid b,\g{a\mid \bot}}}} & a: \s{\ciw\ciw\ciw\cib\cib,\ciw\cib\cib\ciw\ciw,\cib\ciw\cib\ciw\cib},\enspace b: \s{\ciw\ciw\cib\cib\ciw,\ciw\cib\ciw\ciw\cib,\cib\ciw\ciw\cib\ciw,\cib\cib\ciw\ciw\ciw} \\
  \val{\g{a,b\mid \g{a,\g{\top\mid a,\g{b\mid \bot}}\mid \bot}}} & a: \s{\ciw\ciw\ciw\ciw\cib,\ciw\ciw\cib\cib\ciw,\ciw\cib\ciw\cib\ciw},\enspace b: \s{\ciw\ciw\cib\cib\cib,\ciw\cib\cib\cib\ciw,\cib\ciw\cib\ciw\cib,\cib\cib\ciw\ciw\ciw} \\
  \val{\g{\top\mid a,\g{b,\g{\top\mid b,\g{a\mid \bot}}\mid \bot}}} & a: \s{\ciw\ciw\ciw\ciw\cib,\ciw\ciw\cib\cib\ciw,\cib\cib\ciw\cib\ciw},\enspace b: \s{\ciw\ciw\cib\ciw\cib,\ciw\cib\ciw\cib\ciw,\cib\cib\ciw\ciw\ciw} \\
  \val{\g{\g{\top\mid a,b},\g{\top\mid a,\g{a,b\mid \bot}}\mid \bot}} & a: \s{\ciw\ciw\ciw\cib\cib,\ciw\cib\cib\ciw\ciw,\cib\ciw\cib\ciw\cib},\enspace b: \s{\ciw\ciw\cib\cib\ciw,\cib\ciw\ciw\cib\cib,\cib\ciw\cib\ciw\cib,\cib\cib\ciw\ciw\cib} \\
  \val{\g{\g{a,\g{\top\mid b}\mid a},\g{b,\g{\top\mid a}\mid b}\mid \bot}} & a: \s{\ciw\ciw\ciw\cib\cib,\ciw\ciw\cib\ciw\cib,\cib\cib\ciw\cib\ciw},\enspace b: \s{\ciw\cib\ciw\cib\cib,\cib\ciw\cib\ciw\ciw,\cib\cib\ciw\ciw\ciw} \\
  \val{\g{a,\g{\top\mid b}\mid \g{a\mid \bot},\g{b\mid b,\g{a\mid \bot}}}} & a: \s{\ciw\ciw\ciw\cib\cib,\ciw\cib\cib\ciw\ciw,\cib\ciw\cib\ciw\cib},\enspace b: \s{\ciw\cib\ciw\cib\ciw,\cib\ciw\ciw\ciw\cib,\cib\cib\ciw\ciw\ciw} \\
  \val{\g{a,\g{b,\g{\top\mid a}\mid b}\mid \g{a\mid \bot},\g{b\mid \bot}}} & a: \s{\ciw\ciw\ciw\cib\cib,\ciw\ciw\cib\ciw\cib,\ciw\cib\ciw\cib\ciw},\enspace b: \s{\cib\ciw\cib\ciw\ciw,\cib\cib\ciw\ciw\ciw} \\
  \val{\g{\g{a,\g{\top\mid b}\mid a},\g{\top\mid a,\g{b\mid \bot}}\mid \bot}} & a: \s{\ciw\ciw\ciw\cib\cib,\ciw\ciw\cib\ciw\cib,\ciw\cib\ciw\cib\ciw},\enspace b: \s{\ciw\ciw\cib\cib\cib,\cib\ciw\cib\ciw\ciw,\cib\cib\ciw\cib\ciw} \\
  \val{\g{\g{\top\mid a,\g{a,\g{\top\mid b}\mid \bot}}\mid a,\g{b\mid \bot}}} & a: \s{\ciw\ciw\ciw\cib\cib,\ciw\ciw\cib\ciw\cib,\cib\cib\ciw\ciw\ciw},\enspace b: \s{\ciw\cib\ciw\cib\ciw,\cib\ciw\cib\ciw\ciw} \\
\end{array}
\end{mymath}
\begin{mymath}
\begin{array}{ll}
  \valy{\g{\top\mid \g{a\mid a,\g{b\mid \bot}},\g{a,\g{\top\mid a,b}\mid \bot}}} & a: \s{\ciw\ciw\ciw\cib\cib,\ciw\ciw\cib\ciw\cib,\ciw\ciw\cib\cib\ciw,\ciw\cib\ciw\ciw\cib,\cib\ciw\ciw\cib\ciw,\cib\cib\cib\ciw\ciw},\\& b: \s{\ciw\ciw\cib\ciw\cib,\cib\ciw\cib\cib\ciw,\cib\cib\ciw\ciw\ciw} \\\\[-1.5ex]
  \valy{\g{\g{\top\mid a},\g{\top\mid b}\mid \g{a\mid a,\g{b\mid \bot}},\g{b\mid \bot}}} & a: \s{\ciw\ciw\ciw\cib\cib,\ciw\ciw\cib\ciw\cib,\ciw\cib\ciw\cib\ciw},\\& b: \s{\ciw\cib\cib\ciw\ciw,\cib\ciw\ciw\ciw\cib,\cib\ciw\ciw\cib\ciw} \\\\[-1.5ex]
  \valy{\g{\g{a,\g{\top\mid b}\mid a},\g{\top\mid a,\g{a,\g{\top\mid b}\mid \bot}}\mid \bot}} & a: \s{\ciw\ciw\ciw\cib\cib,\ciw\ciw\cib\ciw\cib,\ciw\cib\ciw\cib\ciw,\cib\cib\cib\ciw\ciw},\\& b: \s{\ciw\ciw\cib\cib\cib,\cib\ciw\cib\ciw\ciw,\cib\cib\ciw\cib\ciw} \\\\[-1.5ex]
  \valy{\g{a,\g{\top\mid b,\g{b,\g{\top\mid a}\mid \bot}}\mid \g{a\mid \bot},\g{b\mid \bot}}} & a: \s{\ciw\ciw\ciw\cib\cib,\ciw\ciw\cib\ciw\cib,\ciw\cib\ciw\cib\ciw},\\& b: \s{\ciw\cib\cib\ciw\ciw,\cib\ciw\cib\ciw\cib,\cib\ciw\cib\cib\ciw} \\\\[-1.5ex]
  \valy{\g{\g{a,\g{\top\mid b}\mid a},\g{\top\mid a,\g{a,\g{\top\mid b}\mid \bot}}\mid a,b}} & a: \s{\ciw\ciw\ciw\cib\cib,\ciw\ciw\cib\ciw\cib,\ciw\cib\ciw\cib\ciw,\cib\ciw\cib\ciw\ciw},\\& b: \s{\ciw\cib\ciw\cib\cib,\cib\ciw\cib\ciw\cib,\cib\cib\ciw\ciw\ciw} \\\\[-1.5ex]
  \valy{\g{\top\mid a,\g{b\mid b,\g{a\mid \bot}},\g{b,\g{\top\mid b,\g{a\mid \bot}}\mid \bot}}} & a: \s{\ciw\ciw\ciw\cib\cib,\ciw\cib\cib\ciw\ciw,\cib\ciw\cib\ciw\cib},\\& b: \s{\ciw\ciw\cib\cib\cib,\ciw\cib\ciw\cib\ciw,\cib\ciw\ciw\ciw\cib,\cib\ciw\ciw\cib\ciw,\cib\ciw\cib\ciw\ciw} \\\\[-1.5ex]
  \valy{\g{\g{\g{\top\mid a}\mid a,\g{b\mid \bot}},\g{\g{\top\mid b}\mid b,\g{a\mid \bot}}\mid \bot}} & a: \s{\ciw\ciw\cib\cib\cib,\ciw\cib\ciw\cib\cib,\cib\ciw\cib\ciw\cib,\cib\cib\ciw\cib\ciw},\\& b: \s{\cib\ciw\cib\cib\ciw,\cib\cib\ciw\ciw\cib,\cib\cib\cib\ciw\ciw} \\\\[-1.5ex]
  \valy{\g{\g{\top\mid a},\g{\top\mid b}\mid \g{a\mid a,\g{b\mid \bot}},\g{b\mid b,\g{a\mid \bot}}}} & a: \s{\ciw\ciw\ciw\cib\cib,\ciw\ciw\cib\ciw\cib,\ciw\cib\ciw\cib\ciw},\\& b: \s{\ciw\cib\cib\ciw\ciw,\cib\ciw\ciw\ciw\cib,\cib\cib\ciw\ciw\ciw} \\\\[-1.5ex]
  \valy{\g{\g{\top\mid a,\g{a,\g{\top\mid b}\mid \bot}},\g{\top\mid b,\g{b,\g{\top\mid a}\mid \bot}}\mid \bot}} & a: \s{\ciw\ciw\ciw\cib\cib,\ciw\cib\cib\ciw\ciw,\cib\ciw\cib\ciw\cib},\\& b: \s{\ciw\ciw\cib\cib\ciw,\ciw\cib\ciw\cib\cib,\cib\cib\ciw\ciw\ciw} \\\\[-1.5ex]
\end{array}
\end{mymath}
\begin{mymath}
\begin{array}{ll}
\valy{\g{\g{\top\mid a,\g{a,\g{\top\mid b}\mid \bot}},\g{\top\mid b,\g{b,\g{\top\mid a}\mid \bot}}\mid \g{a\mid \bot},\g{b\mid \bot}}} & a: \s{\ciw\ciw\ciw\cib\cib,\ciw\cib\cib\ciw\ciw,\cib\ciw\cib\ciw\cib},\\& b: \s{\ciw\ciw\cib\cib\ciw,\ciw\cib\ciw\ciw\cib,\cib\cib\ciw\cib\ciw} \\\\[-1ex]
\end{array}
\end{mymath}

\newpage
\bibliographystyle{abbrv}
\bibliography{gadgets}

\end{document}